\crefname{equation}{}{}
\crefname{section}{section}{sections}
\crefname{figure}{figure}{figures}
\crefname{table}{table}{tables}
\crefname{example}{example}{examples}
\crefname{proposition}{proposition}{propositions}
\crefname{assumption}{assumption}{assumptions}
\Crefname{section}{Section}{Sections}
\Crefname{figure}{Figure}{Figures}
\Crefname{table}{Table}{Tables}
\Crefname{definition}{Definition}{Definitions}
\Crefname{theorem}{Theorem}{Theorems}
\Crefname{remark}{Remark}{Remarks}
\Crefname{example}{Example}{Examples}
\Crefname{proposition}{Proposition}{Propositions}
\Crefname{assumption}{Assumption}{Assumptions}
\numberwithin{equation}{section}
\newtheorem{theorem}{Theorem}[section]
\newtheorem{lemma}{Lemma}[section]
\newtheorem{corollary}{Corollary}[section]
\theoremstyle{definition}
\newtheorem{remark}{Remark}[section]
\newtheorem{assumption}{Assumption}[section]
\newtheorem{example}{Example}[section]
\title{Auxiliary functions as Koopman observables: Data-driven analysis of dynamical systems via polynomial optimization}
\author[1]{Jason J. Bramburger}
\author[2]{Giovanni Fantuzzi}
\affil[1]{Department of Mathematics and Statistics, Concordia University,\newline Montr\'eal, QC, Canada}
\affil[2]{Department of Mathematics, Friedrich--Alexander--Universit\"at Erlangen--N\"urnberg, Erlangen, Germany}
\newcommand{\R}{\mathbb{R}} 
\newcommand{\bS}{\mathbb{S}} 
\renewcommand{\vec}[1]{\boldsymbol{#1}} 
\newcommand{\proj}[2]{\mathcal{P}_{#1}^{#2}} 
\newcommand{\dictionarySpan}{\linspan\vec{\psi}}
\newcommand{\afSpan}{\linspan\vec{\phi}}
\newcommand{\timeSet}{\mathbb{T}}
\newcommand{\stateSet}{\mathbb{X}}
\newcommand{\timestep}{\tau}
\newcommand{\koopman}{\mathcal{K}}
\newcommand{\Lie}{\mathcal{L}} 
\DeclareMathOperator{\linspan}{span}
\DeclareMathOperator*{\argmin}{argmin}
\DeclareMathOperator*{\Argmin}{Argmin}
\DeclareMathOperator{\supp}{supp}
\DeclareMathOperator{\rank}{rank}
\DeclareMathOperator{\trace}{trace}
\begin{document}

\maketitle

\begin{abstract}
We present a flexible data-driven method for dynamical system analysis that does not require explicit model discovery. The method is rooted in well-established techniques for approximating the Koopman operator from data and is implemented as a semidefinite program that can be solved numerically. Furthermore, the method is agnostic of whether data is generated through a deterministic or stochastic process, so its implementation requires no prior adjustments by the user to accommodate these different scenarios. Rigorous convergence results justify the applicability of the method, while also extending and uniting similar results from across the literature. Examples on discovering Lyapunov functions, performing ergodic optimization, and bounding extrema over attractors for both deterministic and stochastic dynamics exemplify these convergence results and demonstrate the performance of the method. 
\end{abstract}

\section{Introduction}

In his now famous work~\cite{koopman1931hamiltonian}, Koopman presented an equivalent linear formulation of nonlinear systems through what is now called the Koopman operator. This linear description of genuinely nonlinear systems comes at the expense of lifting the dynamics to an infinite-dimensional Banach space of functions called {\em observables}. Nevertheless, the Koopman operator has become increasingly popular in recent years to address two broad classes of problems in nonlinear dynamics.

The first class of problems is concerned with providing a geometric interpretation for observed properties of dynamical systems. This is primarily achieved by extracting eigenvalues and eigenfunctions of the Koopman operator, so that nonlinear systems can be understood through well-developed techniques for linear systems. For example, Koopman eigenfunction expansions constitute a space-time separation of variables that can be used for forecasting and producing reduced order models, while the eigenfunctions themselves are coherent structures present in the system \cite{brunton2022modern}.
One can also exploit the linearity of the Koopman operator to apply linear control methods to nonlinear system \cite{brunton2016koopman,mamakoukas2019local,abraham2019active,korda2018linear,peitz2019koopman,kaiser2021data} and to quantify uncertainty \cite{mezic2008uncertainty,mezic2004uncertainty}. 

What makes Koopman theory particularly attractive for these problems is the availability of machine learning techniques that can estimate the Koopman operator and/or its spectrum directly from dynamic data (see the review~\cite{brunton2022modern} and the references therein). In this work we focus on one such data-driven technique, called \emph{extended dynamic mode decomposition} (EDMD), that seeks to approximate the action of the Koopman operator on the span of finitely many observables (a \emph{dictionary})~\cite{eDMD}. The result is a finite-dimensional matrix whose spectrum can be numerically extracted and used to approximate that of the true Koopman operator. Importantly, the  good practical performance of EDMD is rigorously justified by convergence guarantees as the amount of data increases~\cite{eDMD,eDMDconv2} and as the dictionary grows~\cite{eDMDconv}.

The second class of problems where the Koopman operator has enjoyed tremendous success is model-based system analysis. In this case, one appeals to Koopman theory to prove \emph{a priori} statements about dynamical systems by finding {\em auxiliary functions} whose derivatives along trajectories (called \textit{Lie derivatives}) satisfy pointwise inequalities implying the desired result. A familiar example of auxiliary functions are the Lyapunov functions used in stability analysis~\cite{Lyapunov1892stability}, which attain a global minimum at an equilibrium and decay monotonically along all other trajectories. Other types of auxiliary functions can be used to bound infinite-time averages~\cite{goluskin2018bounding,tobasco2018optimal}, stochastic expectations~\cite{Fantuzzi2016siads,Kuntz2016bounding}, and extreme values along trajectories~\cite{fantuzzi2020bounding} or over attractors~\cite{goluskin2020bounding}; approximate reachable sets~\cite{Korda2014convex,Magron2019semidefinite}, basins of attraction~\cite{Tan2006stability,Korda2013inner,Henrion2014convex,Valmorbida2017region}, attractors~\cite{Jones2019,Schlosser2022,Schlosser2021}, and invariant sets~\cite{Prajna2006barrier,bramburger2020minimum,Parker2021pendulum}; estimate system parameters and propagate uncertainty~\cite{Streif2013,Streif2014,Miller2021uncertain,covella2022uncertainty}; solve optimal control and optimal stopping problems~\cite{Hernandez1996linear,Cho2002linear,Lasserre2008nonlinear,Henrion2008nonlinear}. 
The key observation in all of these applications is that Lie derivatives of auxiliary functions can be accessed via the Koopman generator without knowledge of system trajectories. Moreover, and crucially for practical applications, the linearity of the Koopman generator ensures that constraints on auxiliary functions are convex. This makes it possible to optimize auxiliary functions computationally if the dynamics are governed by known polynomial equations. Briefly, if one searches for polynomial auxiliary functions with tunable coefficients, then the associated constraints are polynomial inequalities in the time and state variables that (by the linearity of the Koopman generator) depend linearly on the tunable auxiliary function coefficients. Such inequalities are NP-hard to verify in general~\cite{polyNP}, but can be strengthened by requiring that any polynomial to be non-negative is a sum of squares (SOS). These SOS constraints can be reformulated as semidefinite programs~\cite{NesterovSOS,lasserre2001global,Parrilo2003}, for which efficient software packages exist. 

In this work, we unite the data-driven approximation of the Koopman operator with the auxiliary function perspective to show how to extract important information about the system directly from data. This is possible because the Lie derivative operator entering the constraints on auxiliary functions is the generator of the Koopman operator, which can be approximated on observable functions using EDMD. Thus, one may view auxiliary functions as special Koopman observables and approximate them by replacing the exact Lie derivative with data-driven approximations built using EDMD. We demonstrate this in the context of stability analysis, bounding time averages, and bounding extreme values on attractors, but the same ideas apply to all auxiliary function frameworks listed above. In fact, similar ideas have already been used to construct Lyapunov functions~\cite{moyalan2022data,deka2022koopman} and to approximate basins of attraction \cite{zheng2022data} or controlled invariant sets~\cite{KordaDataInvariantSet} from data. Here, we generalize these ideas by going beyond stability analysis and by considering a broad class of stochastic systems that are not necessarily governed by polynomial equations. For this class of systems, we give a complete picture of how EDMD can be used to build approximate Lie derivatives from data. We also show how approximate auxiliary functions for many applications beyond stability analysis can be discovered from data through polynomial optimization to make statements about the underlying dynamical system.

In addition to being applicable to all existing auxiliary function frameworks for dynamical system analysis, our data-driven approach has two major strengths. First, it can be applied to a broad class of deterministic or stochastic processes (again, not necessarily governed by polynomial equations) that evolve in either continuous or discrete time. In fact, no adjustments are needed to implement our method on deterministic versus stochastic data, so it can easily be transferred between applications. This follows from theoretical analysis in \cref{sec:Theory}, which shows that EDMD approximates the correct expression for the Lie derivative irrespective of whether the dynamical process underlying the data is deterministic or stochastic.
The second strength of our approach is that, since it directly discovers approximate Lie derivatives, it does not require the identification of a model for the dynamics. As a result, we are able to approximate auxiliary functions from data even when it would be difficult to first perform model discovery using techniques like SINDy~\cite{SINDy,BramPoincare,WeakSINDy1,WeakSINDy2,McCalla,kaheman2020sindy,rudy2017data,kaptanoglu2021promoting} (see \cref{ss:stoch-logistic-results} for an example in the context of stochastic dynamics). Bypassing the model idenfication step also means that approximate auxiliary functions provide system-level information that apply to the data. In contrast, existing model discovery techniques such as SINDy often prioritize the interpretability of the discovered model over its accuracy, so auxiliary functions identified from a discovered model could have no relationship to the original data due to the inaccuracy built into the model identification step.

In summary, our contributions in this work are twofold. First, for a broad class of stochastic processes defined precisely in \cref{s:setup}, we present a method to discover approximate auxiliary functions from data using polynomial optimization. Specifically, we show how to approximate Lie derivative from data using EDMD (\cref{sec:DataAux}) and how to combine these approximations with tools for polynomial optimization (\cref{sec:sdps}). Second, we provide rigorous convergence results on the EDMD-based estimation of the Lie derivative from data (\cref{sec:Theory}). These extend previous convergence results for EDMD~\cite{eDMD,eDMDconv,eDMDconv2} to a more general class of stochastic processes, and hold under weaker assumptions. We also discuss in \cref{ss:gEDMD}  the possibility of replacing EDMD with \emph{generator EDMD} (gEDMD) \cite{Klus2020}, revealing that the two approaches may give very different results even if the amount of data and the data sampling rate become infinite.

The performance of our approach and our theoretical convergence results are illustrated in \cref{sec:FirstEx} by a range of numerical and analytical examples. Precisely, we demonstrate that one can identify Lyapunov functions, bound long-time deterministic and stochastic averages, and bound pointwise extrema over attractors using only data gathered from an underlying system. These three particular applications are reviewed in \cref{sec:AuxFunctions} and serve as motivating examples for the use of  auxiliary functions for dynamical systems analysis. Howevever, we emphasize once again that our data-driven approach is not limited to these three applications, but can be applied to any of the auxiliary function frameworks listed earlier in this introduction. Concluding remarks and an outline of potential avenues for future work are offered in \cref{sec:Conclusion}.
\section{Motivating examples}
\label{sec:AuxFunctions}

To set the scene, we begin by reviewing how Lie derivatives of auxiliary functions enable one to perform stability analysis, bound infinite-time averages, and bound extreme values of observables on attractors. 
To ease the discussion, we focus here on deterministic continuous-time processes governed by a nonlinear ODE $\dot{X_t} = f(X_t)$, where the state $X_t$ is a point in $\R^d$ at each time $t$. In this case, the Lie derivative of a continuously differentiable function $\varphi:\R^d \to \R$ is its derivative along ODE solutions,
\begin{equation*}
	\Lie \varphi(x) := f(x) \cdot \nabla \varphi(x).
\end{equation*}
However, the three auxiliary function frameworks reviewed below can be applied to discrete-time deterministic dynamics and to stochastic dynamics simply by replacing this definition of $\Lie \varphi$ with those given in \cref{s:setup}.

\subsection{Global and local stability}
Let $X_t \in \R^d $ be the solution of the ODE $\dot{X_t} = f(X_t)$ at time $t$ and assume $f$ satisfies $f(0) = 0$, so the point $X=0$ is an equilibrium. Lyapunov \cite{Lyapunov1892stability} showed that the equilibrium point $X = 0$ is globally stable if there exists a continuously differentiable function $V:\mathbb{R}^d\to\R$ satisfying
\begin{subequations}
	\begin{align}
		\label{e:lyap:positivity}
		&&&&&&V(x) &\geq 0 &&\forall x \in \mathbb{R}^d,&&&&&&\\
		\label{e:lyap:decay}
		&&&&&&\Lie V(x) &\leq 0 &&\forall x \in \mathbb{R}^d,&&&&&&\\
		\label{e:lyap:coercivity}
		&&&&&&V(x) &\to +\infty &&\text{as }\|x\| \to +\infty.&&&&&&
	\end{align}
\end{subequations}
In particular, one has global asymptotic stability if $V(0)=0$ and the inequalities in \cref{e:lyap:positivity,e:lyap:decay} are strict whenever $x \neq 0$. Local (asymptotic) stability can be proved by imposing \cref{e:lyap:positivity,e:lyap:decay} only in a neighbourhood $S$ of the equilibrium point, which implies the largest sublevel set of $V$ included in $S$ is positively invariant \cite[\S4.8]{Khalil2002}.

\subsection{Ergodic optimization}
\label{sec:ergodic-optimization}
Auxiliary functions can be used to estimate long-time averages, a problem at the heart of ergodic theory. Given a trajectory $X_t$ of an ODE $\dot{X_t} = f(X_t)$ with initial condition $X_0$, the long-time average of a continuous function $g:\R^d \to \R$ is defined as
\begin{equation}\label{ErgodicAvg}
	\overline{g}(X_0) = \limsup_{T \to \infty} \frac{1}{T}\int_0^T {g}(X_t)\mathrm{d}t.
\end{equation}

We seek to compute the largest possible long-time average among trajectories starting from a compact, forward-invariant set $S \subset \mathbb{R}^d$,
\begin{equation}
	\overline{g}^* := \sup_{X_0 \in S} \overline{g}(X_0).
\end{equation}
(The minimal time average can be deduced by negating upper bound on the maximal time average of $-g$.)
The value $\overline{g}^*$ can be determined using auxiliary functions with no need to determine explicit optimal trajectories. Precisely, it was shown in~\cite{tobasco2018optimal} that
\begin{equation}\label{AvgBnd}
	\overline{g}^*
	= \inf_{\substack{U \in \R\\V\in C^1(B,\R)}} \left\{
	U:\; U - {g}(x) + \Lie V(x) \geq 0 \quad \forall x \in B
	\right\}.
\end{equation}
In particular, any feasible auxiliary function $V$ and corresponding constant $U$ yield the upper bound $\overline{g}^* \leq U$.

\subsection{Attractor bounds}
\label{sec:attractor-bounds}

We again consider an ODE $\dot{X_t} = f(X_t)$ and a continuous function $g:\R^d \to \R$. Instead of bounding the extremal time-averaged behaviour, we can seek to determine the extremal value that $g$ can attain over the ODE's global attractor $\mathcal{A}$. Precisely, we seek to bound the maximal value
\begin{equation}
	g_\mathcal{A}^+ := \max_{x \in \mathcal{A}} g(x).
\end{equation}  
(Lower bounds on $g$ over $\mathcal{A}$ can be deduced by negating upper bounds on $-g$.)
As demonstrated in \cite{goluskin2020bounding}, if the unknown attractor $\mathcal{A}$ is contained in a known positively invariant set $S \subset \R^d$, then $g_\mathcal{A}^+$ can be bounded from above using auxiliary functions by
\begin{align} \label{AttractorBound}
	g^+_\mathcal{A} \leq \inf_{\substack{\lambda > 0\\ V \in C^1(S)}}
	\big\{ U:\;
	U - V(x) - \lambda \mathcal{L} V(x) &\geq 0\quad \forall x\in S
	\nonumber
	\\[-3ex]
	\text{and } V(x) - g(x) &\geq 0\quad \forall x\in S \big\}.
\end{align}
The minimization problem on the right-hand side is convex for every fixed $\lambda$. Generalizations that evaluate $g^+_\mathcal{A}$ exactly also exist \cite{Schlosser2021}, but they introduce additional auxiliary functions and will not be considered here for simplicity.
\section{A class of dynamical systems}
\label{s:setup}

Having reviewed three particular applications of auxiliary functions, we now switch gear and introduce a general class of stochastic Markov processes whose dynamics can be studied using auxiliary functions and their Lie derivatives. As explained at the end of the section, this class includes processes governed by stochastic differential equations, stochastic maps, and their deterministic counterparts. Indeed, observe that deterministic processes may be viewed as stochastic ones whose state at time $t$ is determined almost surely given the state at any previous time $s<t$.

\subsection{General stochastic framework}\label{ss:setup:general}
Let $X_t$ denote the state at time $t$ of a stochastic process on a probability space $(\Omega, \mathcal{F}, \pi)$, which evolves in a subset $\stateSet$ of a Banach space over either the continuous time set $\timeSet = \R_+$ or the discrete time set $\timeSet = \mathbb{N}$.  
We write $\mathbb{E}[\varphi(s,X_{s})|X_{t}=x]$ for the expected value of $\varphi(s,X_{s})$ at time $s\geq t$ given that $X_{t}=x$, with the understanding that $\mathbb{E}[\varphi(s,X_{s})\,|\,X_{t}=x] = \varphi(s,X_{s})$ for deterministic dynamics.
The \emph{generator} of the process is the linear operator $\Lie$ defined on the space $C_{b}(\timeSet \times \stateSet)$ of bounded continuous functions on $\timeSet \times \stateSet$ via
\begin{equation*}
	\Lie\varphi(t,x):=\mathbb{E}[\varphi(t+1,X_{t+1})\,|\,X_{t}=x]-\varphi(t,x)
\end{equation*}
in discrete time and by
\begin{equation*}
	\Lie\varphi(t,x)=\lim_{\timestep\to0^{+}}\frac{\mathbb{E}[\varphi(t+\timestep,X_{t+\timestep})\,|\,X_{t}=x]-\varphi(t,x)}{\timestep}
\end{equation*}
in continuous time, provided the limit exists uniformly on $\timeSet\times\stateSet$. We write $\mathcal{D}(\Lie)$ for the domain of $\Lie$ and we call $\Lie\varphi$ the \emph{Lie derivative} of $\varphi$ since, for deterministic processes, $\Lie\varphi$ gives simply the difference (in discrete time) or derivative (in continuous time) along trajectories of the process. Note that $\varphi$ is assumed bounded to ensure that expectations are finite. For deterministic processes, however, Lie derivatives are well defined for all sufficiently smooth functions even if they are unbounded.

We will restrict our attention to stochastic processes that are Markov and solve the so-called \emph{martingale problem} for their generators $\Lie$. This means that, for all times $s\geq t$ and all $\varphi$ in the domain of $\Lie$, we have
\begin{subequations}\label{eq:martingale-all}
\begin{equation}\label{eq:martingale-discrete}
	\mathbb{E}\left[\varphi(s,X_{s})\;|\;X_{t}=x\right]=\varphi(t,x)+\mathbb{E}\left[\sum_{\tau=t}^{s-1} \Lie\varphi(\tau,X_{\tau})\;|\;X_{t}=x\right]
\end{equation}
in discrete time and
\begin{equation}\label{eq:martingale-continuous}
	\mathbb{E}\left[\varphi(s,X_{s})\;|\;X_{t}=x\right]
	= \varphi(t,x) +
	\mathbb{E}\left[
		\int_{t}^{s}\Lie\varphi(\tau,X_{\tau})\,d\tau\;|\;X_{t}=x
		\right]
\end{equation}
\end{subequations}
in continuous time. A detailed treatment of martingale problems and their use in characterizing stochastic processes can be found in \cite{EthierKurtz1986}. 

Given a Markov process in the class just described and a positive timestep $\timestep \in \timeSet$, one can define a linear operator $\koopman^\timestep$ on $C_b(\timeSet \times\stateSet)$, sometimes called the \emph{stochastic Koopman operator}~\cite{Klus2020,Crnjaric-Zic2020,Wanner2022}, that maps a function $\varphi$ to
\begin{equation*}
	\koopman^\timestep\varphi(t,x) := \mathbb{E}[\varphi(t+\timestep,X_{t+\timestep})|X_{t}=x].
\end{equation*}
One can use the relevant condition in \cref{eq:martingale-all} and the Markov property to check that the family $\{\koopman^\timestep: \timestep \in \timeSet \}$ of Koopman operators is a one-parameter contraction semigroup on $C_b(\timeSet \times \stateSet)$ for the $L^\infty$ norm. The generator of the semigroup, of course, is $\Lie$.

\subsection{Classical examples}\label{subs:classical-examples}
The general framework introduced above includes processes $X_{t}$ that are governed by deterministic maps, stochastic maps, ODEs, and stochastic differential equations. We briefly explain this here, giving the corresponding expression for the Lie derivatives.

\begin{example}[Deterministic maps]
	Let $\{X_{t}\}_{t\in\mathbb{N}}$ be a discrete-time process governed by the deterministic map $X_{t+1}=f(t,X_{t})$. Then, condition \cref{eq:martingale-discrete} holds for any continuous function $\varphi$ with $$\Lie\varphi(t,x):=\varphi(t+1,f(t,x))-\varphi(t,x).$$
\end{example}

\begin{example}[Ordinary differential equations]
	Set $\stateSet=\mathbb{R}^{d}$, $\timeSet = \mathbb{R}_+$, and let $X_{t}$ solve the ODE $\dot{X_{t}}=f(t,X_{t})$ for some locally Lipschitz continuous function $f:\mathbb{R}_{+}\times\mathbb{R}^{d}\to\mathbb{R}^{d}$.
	Then, condition \cref{eq:martingale-continuous} holds for any continuously differentiable function $\varphi$ with $$\Lie\varphi := \partial_{t}\varphi+f\cdot\nabla_{x}\varphi.$$
\end{example}

\begin{example}[Stochastic maps]
	Let $\{X_{t}\}_{t\in\mathbb{N}}$ be a discrete-time stochastic process governed by the random map $X_{t+1}=f(\omega(t),t,X_{t})$,	where the function $\omega\mapsto f(\omega,\cdot,\cdot)$ is a random variable from some probability space $(\Omega,\mathcal{F},\pi)$ into the space of maps from $\timeSet \times \stateSet$ to $\stateSet$.
	Equivalently, the value of $X_{t+1}$ is sampled randomly from some stochastic kernel $\nu_{t,x}$, meaning a probability measure on $\stateSet$ that depends on the time $t$ and on the value $x$ taken by $X_{t}$. Condition \cref{eq:martingale-discrete} holds for any bounded continuous function $\varphi$ with
	\begin{align*}
		\Lie\varphi(t,x) 
		&:=\int_{\Omega}\varphi(t+1,f(\omega,t,x))d\pi(\omega)-\varphi(t,x)\\
		&=\int_{\stateSet}\varphi(t+1,y)d\nu_{t,x}(y)-\varphi(t,x).
	\end{align*}
\end{example}

\begin{example}[Stochastic differential equations]
	Set $\stateSet=\mathbb{R}^{d}$, $\timeSet = \mathbb{R}_+$, and let $X_{t}$ solve the stochastic differential equation $dX_{t}=f(t,X_{t})dt+g(t,X_{t})dW(t)$ for some locally Lipschitz functions
	$f:\mathbb{R}_{+}\times\mathbb{R}^{d}\to\mathbb{R}^{d}$ and \
	$g:\mathbb{R}_{+}\times\mathbb{R}^{d}\mapsto\mathbb{R}^{d\times k}$,
	where $dW(t)$ is a $k$-dimensional Brownian process. Dynkin's formula shows that \cref{eq:martingale-continuous} holds for functions $\varphi$ that are twice continuously differentiable and bounded with
	\begin{equation*}
		\Lie\varphi
		:=
		\partial_{t}\varphi
		+ f\cdot\nabla_{x}\varphi
		+ \frac{1}{2}
		\left\langle g g^{\top}, \nabla_{x}^{2}\varphi \right\rangle,
	\end{equation*}
	where $\nabla_{x}^{2}\varphi$ is the Hessian of $\varphi$ with respect to the $x$ variable and $\langle A,B\rangle=\sum_{i,j}A_{ij}B_{ij}$.
\end{example}
\section{Data-driven approximation of Lie derivatives}
\label{sec:DataAux}

As demonstrated by the examples in \cref{sec:AuxFunctions}, constructing auxiliary functions for dynamical system analysis requires knowledge of their Lie derivatives. In \cref{sec:EDMD} below, we describe how to use EDMD \cite{eDMD} to build accurate Lie derivative approximations from data for the stochastic processes introduced in \cref{ss:setup:general}. We then establish rigorous convergence guarantees for these approximations in the limits of infinite data, infinite data sampling rate, and infinite dictionaries. In particular:
\begin{enumerate}
	\item \Cref{thm:inf-data-limit} in \cref{s:InfData} generalizes infinite-data convergence results for dynamics governed by particular equations proven in \cite{eDMDconv,eDMDconv2,Crnjaric-Zic2020,Wanner2022} to general Markov processes solving the martingale problem for their generator (see \cref{s:setup}). Our proof also removes invertibility assumptions required in these previous works to pass to the limit along certain matrix sequences.
	\item \Cref{thm:h-to-zero} in \cref{s:InfSample}, coupled with the discussion of \cref{ss:gEDMD}, elucidates the link between EDMD with infinite sampling rate ($\timestep \to 0$) and generator EDMD (gEDMD). Contrary to what one might expect, we show that approximations via EDMD and gEDMD need not coincide as the data sampling rate increases, and we provide conditions under which they do (see \cref{thm:h-to-zero} and its \cref{cor:h-to-zero-pointwise-conv}).
	\item \Cref{thm:conv-m-infty-poitwise} in \cref{s:InfDictionary} provides conditions for the {\em pointwise} convergence of approximate Lie derivatives to exact ones, whereas existing results only guarantee convergence in a suitable Lebesgue norm. 
\end{enumerate}
More details on these extensions are provided throughout the section. Readers who are interested in the practical implementation of our methods may initially focus only on \cref{sec:EDMD} and then proceed directly to \cref{sec:sdps}, where we combine approximate Lie derivatives with polynomial optimization to perform system analysis on data.

\subsection{The EDMD method}
\label{sec:EDMD}

Let $\phi_1,\ldots, \phi_\ell$ and $\psi_1, \ldots, \psi_m$ be two finite dictionaries in $C_b(\timeSet \times \stateSet)$ whose elements are referred to as \emph{observables}. We set
\begin{equation*}
    \vec{\phi} :=\begin{pmatrix}\phi_{1}\\ \vdots \\ \phi_{\ell}\end{pmatrix},
    \qquad
    \vec{\psi} := \begin{pmatrix}\psi_{1} \\ \vdots \\ \psi_{m} \end{pmatrix},
\end{equation*}
and write $\afSpan$ (resp. $\dictionarySpan$)
for the linear span of $\phi_{1}$, $\ldots$, $\phi_{\ell}$ (resp. $\psi_{1}$, $\ldots$, $\psi_{m}$).  

Let there be given $n$ `data snapshots', where $x_{i} = X_{t_{i}}$ and $y_{i}=X_{t_{i}+\timestep}$ for a fixed time increment $\timestep>0$. The EDMD framework constructs an \emph{approximate Koopman operator}, $\koopman_{mn}^{\timestep}:\afSpan\to\dictionarySpan$, that approximates the action of the exact Koopman operator $\koopman^\timestep$ on $\afSpan$ using linear combinations of functions in $\dictionarySpan$. To build this operator, define the matrices
\begin{gather*}
    \Psi_{n}:=
    \begin{bmatrix}
        | &  & |\\
        \vec{\psi}(t_{1},x_{1}) & \cdots & \vec{\psi}(t_{n},x_{n})\\
        | &  & |
    \end{bmatrix} \in\mathbb{R}^{m \times n},
    \\[1ex]
    \Phi_{n}^{\timestep}:=
    \begin{bmatrix}
        | &  & |\\
        \vec{\phi}(t_{1}+\timestep,y_{1}) & \cdots & \vec{\phi}(t_{n}+\timestep,y_{n})\\
        | &  & |
    \end{bmatrix} \in \mathbb{R}^{\ell\times n},
\end{gather*}
and set 
\begin{equation}\label{e:Kmnh-def}
    K_{mn}^{\timestep}:=\Phi_{n}^{\timestep}\Psi_{n}^{\dagger} 
    = \left(\Phi_{n}^{\timestep}\Psi_{n}\right) \left( \Psi_{n}\Psi_{n}^\top\right)^{\dagger},
\end{equation} 
where the superscript ${\dagger}$ denotes the Moore--Penrose pseudoinverse. Observe that $K_{mn}^{\timestep}$ minimizes the Frobenius norm $\|\Phi_{n}^{\timestep}-K\Psi_{n}\|_{F}$ over all $\ell\times m$ matrices $K$, and has the smallest Frobenius norm among all optimizers. (These are not unique unless $\Psi_{n}\Psi_{n}^{\top}$ is invertible, which we do not assume.)
The approximate Koopman operator $\koopman_{mn}^{\timestep}$ then acts on a function $\varphi=\vec{c}\cdot\vec{\phi}$ with $\vec{c}\in\mathbb{R}^{\ell}$ by
\begin{equation}\label{e:approx-koopman}
\koopman_{mn}^{\timestep}\varphi:=\vec{c}\cdot K_{mn}^{\timestep}\vec{\psi}.
\end{equation}

Once an approximate Koopman operator is available, it is almost immediate to approximate Lie derivatives. To this end, we introduce the following assumption.
\begin{assumption}\label{ass:dictionary}
    The dictionaries $\vec{\phi}$, $\vec{\psi}$ satisfy
    $\afSpan\subseteq\mathcal{D}(\Lie)$ and $\afSpan\subseteq\dictionarySpan$.
\end{assumption}
Assuming that $\afSpan\subseteq\mathcal{D}(\Lie)$ ensures functions in $\afSpan$ have well-defined Lie derivatives. This assumption is easily satisfied for the four classes of dynamical processes in \cref{subs:classical-examples}: for ODE dynamics, for instance, $\vec{\phi}$ can be any dictionary of continuously differentiable functions. In the data-driven setting we have in mind, however, one does not know $\mathcal{L}$ or its domain, so one must choose a reasonable dictionary $\vec{\phi}$ bearing in mind that any results one derives are conditional on \cref{ass:dictionary}. This difficulty, of course, is common to all data-driven Koopman approximation methods.

The second part of \cref{ass:dictionary}, instead, can always be satisfied through an appropriate choice of $\vec{\psi}$ and implies that
\begin{equation}
    \vec{\phi}=\Theta_{m}\vec{\psi}
    \label{eq:phi-to-psi}
\end{equation}
for some $\ell\times m$ matrix $\Theta_{m}$.
One can then build an approximate Lie derivative operator $\Lie_{mn}^{\timestep}$ from $\afSpan$ into $\dictionarySpan$ simply by defining, for every $\varphi=\vec{c}\cdot\vec{\phi}$,
\begin{equation}
    \Lie_{mn}^{\timestep}\varphi=\vec{c}\cdot L_{mn}^{\timestep}\vec{\psi}
    \qquad\text{where}\qquad
    L_{mn}^{\timestep}:=\frac{K_{mn}^{\timestep}-\Theta_{m}}{\timestep}.
	\label{e:Lmnh-def}
\end{equation}
This is essentially a finite-difference approximation of $\Lie\varphi$, since using \cref{eq:phi-to-psi} one finds
\begin{equation*}
\Lie_{mn}^{\timestep}\varphi
=\frac{\vec{c}\cdot K_{mn}^{\timestep}\vec{\psi}-\vec{c}\cdot\vec{\phi}}{\timestep}
=\frac{\koopman_{mn}^{\timestep}\varphi-\varphi}{\timestep}
\approx\frac{\koopman^{\timestep}\varphi-\varphi}{\timestep}
\approx\Lie\varphi.
\end{equation*}
Analysis in the next subsection rigorously justifies these heuristic approximations. Precisely, under reasonable assumptions on the data snapshots, we prove that $\Lie^\timestep_{mn}\varphi$ converges to $\Lie \varphi$ in a suitable norm as $n\to\infty$, $\timestep\to0$, and $m\to\infty$.

\begin{remark}
    While EDMD is typically applied with $\vec{\psi}=\vec{\phi}$, using two different dictionaries may produce more accurate results. This is especially true if, as is often the case, $\afSpan$ is not closed under the action of the Koopman generator $\Lie$. Examples are given in \cref{sec:FirstEx}. This observation also underpins EDMD-based system discovery methods such as SINDy~\cite{SINDy,BramPoincare,WeakSINDy1,WeakSINDy2,McCalla,kaheman2020sindy,rudy2017data,kaptanoglu2021promoting}, where $\vec{\phi}=(X_1,\ldots,X_d)$ lists only the state variables while $\vec{\psi}$ is a rich dictionary of nonlinear functions.
\end{remark}

\begin{remark}
    Contrary to classical implementations of EDMD, we use dictionaries $\vec{\phi}$ and $\vec{\psi}$ with explicit time dependence. This extension is necessary to ensure that approximate Lie derivatives can be employed to implement auxiliary function frameworks for non-autonomous dynamics or for finite time horizons. Indeed, such cases typically require auxiliary functions with explicit dependence on the time variable (see, e.g., \cite{Lasserre2008nonlinear,Streif2013,Korda2014convex,fantuzzi2020bounding,Miller2021peak,Schlosser2021,Schlosser2022,covella2022uncertainty}).
\end{remark}

\subsection{Convergence results}\label{sec:Theory}


We now prove that the approximate Lie derivative $\Lie_{mn}^\timestep \varphi$  in \cref{e:Lmnh-def} converges to the exact Lie derivative $\Lie\varphi$ in the limits of infinite data ($n\to\infty$), infinite data sampling rate ($\timestep\to0$), and infinite EDMD dictionary $\vec{\psi}$ ($m\to\infty$). Our results extend known statements (see, e.g., \cite{eDMDconv,eDMDconv2,Klus2020,Crnjaric-Zic2020,Wanner2022}) to a broader class of stochastic processes and apply under weaker assumptions. We focus on continuous-time processes, but equivalent results for discrete-time processes can be recovered by setting $\timestep=1$ in what follows and ignoring results about the $\timestep\to 0$ limit.

\subsubsection{Assumptions on the data sampling method}
\label{ss:sampling-assumptions}
We assume the data snapshots $(t_{i},x_{i},y_{i})_{i=1}^n$ satisfy $x_i = X_{t_i}$ and $y_i = X_{t_i + \timestep}$. If the distribution of the random variable $X_{t+\timestep}$ given $X_{t}=x$ is described by a probability measure $\nu_{t,x}$ on $\stateSet$, each $y_i$ is a random variable with distribution $\nu_{t_i, x_i}$. We further assume that each pair $(t_i, x_i)$ is sampled from a probabilty measure $\mu$ on $\timeSet\times\stateSet$. Then, the data snapshots $(t_{i},x_{i},y_{i})$ are random variables whose joint distribution is the probability measure $\rho$ on $\timeSet\times\stateSet\times\stateSet$ defined for every Borel subsets $E_t\subset \timeSet$ and $E_x,E_y \subset \stateSet$ via
\begin{equation*}
	\rho(E_t,E_x,E_y) := \int_{E_t\times E_x} \nu_{t,x}(E_y)\;d\mu(t,x).
\end{equation*}

Our analysis will rely on the following assumption, where the quantifier \emph{almost surely} means that a statement holds for almost all sequences $\{(t_i,x_i,y_i)_{i=1}^n \}_{n\geq 1}$ of data snapshots generated by the sampling strategy.

\begin{assumption}\label{ass:weak-convergence}
    For every function $g\in C_{b}(\timeSet\times\stateSet\times\stateSet)$, there almost surely holds
    \begin{equation}
    \lim_{n\to\infty} \frac{1}{n}\sum_{i=1}^{n}g(t_{i},x_{i},y_{i})
	=
    \int g(t,x,y)\,d\rho(t,x,y).
    \label{eq:weak-convergence}
    \end{equation} 
\end{assumption}

This assumption is standard in the analysis of EDMD and its variations, and can be ensured in two ways. If $\rho$ is an ergodic measure, one can collect the snapshots from a trajectory of the dynamical system. Alternatively, if $\stateSet$ is a separable Banach space, one can sample the snapshots independently from $\rho$~\cite{Varadarajan1958}. In the latter case, the theory of Monte Carlo integration (see, e.g., \cite{EvansSwartz2000}) ensures convergence at a rate of $1/\sqrt{n}$. In the former case, instead, no general convergence rate can be stated because ergodic averages can converge arbitrarily slowly \cite{Krengel1978}.

\subsubsection{Orthogonal projections}
\label{ss:orth-proj}

Recall from \cref{s:setup} that the Lie derivative operator $\Lie$ is the generator of the Koopman semigroup on $C_b(\timeSet \times \stateSet)$, which is equipped with the topology of uniform convergence. 
For our analysis, however, it will be convenient to view $C_b(\timeSet \times \stateSet)$ and its subspaces $\afSpan$ and $\dictionarySpan$ as subspaces of $L_{\mu}^{2}(\timeSet\times\stateSet)$, the Lebesgue space of functions $\varphi:\timeSet\times\stateSet \to \mathbb{R}$ that are square-integrable with respect to the probability measure $\mu$ from \cref{ss:sampling-assumptions}. The norm on this space is
\begin{equation*}
\|\varphi\|_{L_{\mu}^{2}}:= \bigg(\int_{\timeSet\times\stateSet}|\varphi(t,x)|^{2}d\mu(t,x)\bigg)^\frac{1}{2}.
\end{equation*}

We will work extensively with the projection of functions in $L^2_\mu(\timeSet\times\stateSet)$ onto $\dictionarySpan$. For every $f \in L^2_\mu(\timeSet\times\stateSet)$, this projection is defined via
\begin{equation}\label{e:projection-def}
	\proj{m}{\mu} f := \argmin_{u \in \dictionarySpan} \
	\left\| u - f \right\|_{L^2_\mu}.
\end{equation}
It is well known that $\proj{m}{\mu}$ is a linear operator and satisfies $\|\proj{m}{\mu}f\|_{L^2_\mu}\leq \|f\|_{L^2_\mu}$. Note that the minimizer in \cref{e:projection-def} is unique as an element of $L^2_\mu$, but could be attained by multiple functions in $\dictionarySpan$ that agree $\mu$-almost everywhere. We denote the set of all minimizers in $\dictionarySpan$ by
\begin{equation*}
    \Argmin_{u \in \dictionarySpan} \ \left\| u - f \right\|_{L^2_\mu}
    :=
    \bigg\{ 
        \varphi \in \dictionarySpan: \;
        \left\| \varphi - f \right\|_{L^2_\mu} 
        = 
        \min_{u \in \dictionarySpan} \ \left\| u - f \right\|_{L^2_\mu} 
    \bigg\}.
\end{equation*}

\subsubsection{The infinite-data limit}\label{s:InfData}

The first step to establish the convergence of the approximate Lie derivative $\Lie_{mn}^\timestep \varphi$ is to consider the limit of infinite data ($n\to\infty$). For each fixed $n$, define the matrices
\begin{align*}
    A_n^\timestep &:= \frac{1}{n} \left( \Phi_n^\timestep \Psi_n^\top \right) = \frac{1}{n}\sum_{i=1}^n \vec{\phi}(t_i+\timestep,y_i) \vec{\psi}(t_i,x_i)^\top,
    \intertext{and}
    B_n &:= \frac{1}{n} \left( \Psi_n \Psi_n^\top \right) = \frac{1}{n}\sum_{i=1}^n \vec{\psi}(t_i,x_i) \vec{\psi}(t_i,x_i)^\top.
\end{align*}
The matrix $K_{mn}^\timestep$ used to define the approximate Lie derivative in \cref{e:Lmnh-def} satisfies
\begin{equation}\label{e:Kmnh-identity}
    K_{mn}^{\timestep} 
    =\left(\Phi_{n}^{\timestep}\Psi_{n}^{\top}\right)\left(\Psi_{n}\Psi_{n}^{\top}\right)^{\dagger}
    =A_{n}^{\timestep}B_{n}^{\dagger},
\end{equation}
so it is enough to identify the limits of $A_{n}^{\timestep}$ and $B_{n}^{\dagger}$ as $n$ tends to infinity.

The entries of $A_n^\timestep$ and $B_n$ in position $(i,j)$ are,  respectively, the empirical averages of the values of the functions $\phi_i(t+\tau,y)\psi_j(t,x)$ and $\psi_i(t,x)\psi_j(t,x)$ at the data snapshots. These functions are in $C_b(\timeSet\times\stateSet)$ by construction, so \cref{ass:weak-convergence} gives
\begin{subequations}\label{e:AB-convergence}
    \begin{align}
        \lim_{n\to\infty} A_n^{\timestep} &= \int\vec{\phi}(t+\timestep,y)\vec{\psi}(t,x)^{\top}\,d\rho(t,x,y)=: A^\timestep
        \\
        \lim_{n\to\infty} B_n &=\int\vec{\psi}(t,x)\vec{\psi}(t,x)^{\top}\,d\mu(t,x) =: B
        \label{e:Bn-convergence}
    \end{align}
\end{subequations}
almost surely. However, since pseudo-inversion is not a continuous operation, it is not clear that $B_n^\dagger \to B^\dagger$ in \cref{e:Kmnh-identity}. The next lemma shows that this is true almost surely (see also \cite{Wanner2022} for the case in which $B$ is assumed to have full rank).

\begin{lemma}
\label[lemma]{lem:pseudoinversion}
Under \cref{ass:weak-convergence},
$B_{n}^{\dagger}\to B^{\dagger}$ almost surely as $n\to\infty$.
\end{lemma}

\begin{proof}
Since pseudo-inversion is continuous along constant-rank sequences~\cite{Stewart1969}, it suffices to prove that $\rank(B_{n})=\rank(B)$ almost surely for sufficiently large $n$.
On the one hand, we almost surely have $\rank(B_{n})\leq\rank(B)$ when $n$ is large enough because $\ker(B)\subset \ker(B_n)$ almost surely. Indeed, if $\vec{v} \in \ker(B)$ then
\begin{equation*}
	0 = \vec{v}^\top B \vec{v}
	= \int \left|\vec{v} \cdot \vec{\psi}(t,x)\right|^2\,d\mu(t,x),
\end{equation*}
so $\vec{v} \cdot \vec{\psi}(t,x) = 0$ almost everywhere on the support of $\mu$. This means $\vec{v} \cdot \vec{\psi}(t_i,x_i) = 0$ almost surely for each data snapshot, so almost surely
\begin{equation*}
    B_n \vec{v} 
    = \frac{1}{n} \left( \Psi_n \Psi_n^\top \right) \vec{v} 
    = \frac{1}{n} \sum_{i=1}^n \vec{\psi}(t_i,x_i)\vec{\psi}(t_i,x_i)^\top \vec{v}
    = 0.
\end{equation*}
    
On the other hand, setting $r=\rank(B)$, we have that $\rank(B_n)\geq r$ almost surely for large enough $n$ because $\ker(B_n)$ does not contain the orthonormal eigenvectors $\vec{v}_1,\ldots,\vec{v}_r$ of $B$ corresponding to positive eigenvalues $\lambda_1\geq \lambda_2 \geq \cdots \geq \lambda_r>0$.
Indeed, by \cref{e:Bn-convergence} there almost surely exists $n_0 \in \mathbb{N}$ such that
$\|B_n - B\|_F \leq \tfrac12 \lambda_r$
when $n\geq n_0$. For every $j \in \{1,\ldots,r\}$, therefore,
\begin{equation*}
	\vec{v}_j^\top B_n \vec{v}_j
	\geq
	\vec{v}_j^\top B \vec{v}_j
	- \left\|B_n - B \right\|_F \|\vec{v}_j \|^2
	\geq
	\lambda_j	- \tfrac12 \lambda_r
    \geq \tfrac12 \lambda_r
	> 0.
    \qedhere
\end{equation*}
\end{proof}

We are now ready to prove that, as $n\to\infty$, the EDMD approximations $\koopman_{mn}^{\timestep}\varphi$ and $\Lie_{mn}^{\timestep}\varphi$ defined for a function $\varphi=\vec{c}\cdot\vec{\phi}$ in \cref{e:approx-koopman,e:Lmnh-def} converge, respectively, to 
\begin{subequations}
    \begin{align}
        \koopman_{m}^{\timestep}\varphi 
        &:=\vec{c}\cdot A^{\timestep}B^{\dagger}\vec{\psi},
        \\
        \Lie_{m}^{\timestep}\varphi 
        &:=\vec{c}\cdot \timestep^{-1}\left(A^{\timestep}B^{\dagger} - \Theta_m\right)\vec{\psi}.
    \end{align}
\end{subequations}
Moreover, these limits are $L_{\mu}^{2}$-orthogonal projections onto $\dictionarySpan$ of $\koopman^{\timestep}\varphi$ and of the difference quotient
\begin{equation*}
    \Lie^{\timestep}\varphi := \frac{\koopman^\timestep\varphi - \varphi}{\tau}.
\end{equation*} 
This result, stated precisely in \cref{thm:inf-data-limit} below, generalizes analogous statements for discrete-time processes \cite{eDMDconv,Crnjaric-Zic2020,Wanner2022} to a broader class of Markov stochastic processes. Moreover, our proof does not assume the matrix $B$ to be invertible, so $L_{\mu}^{2}$-orthogonal projections onto $\dictionarySpan$ are not uniquely defined as elements of $\dictionarySpan$ in general. This assumption was already dropped in \cite{Klus2020} in the context of stochastic differential equations, but with no justification of why $B_n^\dagger$ converges to $B^\dagger$.

\enlargethispage*{\baselineskip}
\begin{theorem}\label{thm:inf-data-limit}
    If \cref{ass:dictionary,ass:weak-convergence} are satisfied, then for any $\varphi\in \afSpan$ and any norm on $\dictionarySpan$ there almost surely holds
    \begin{align*}
        \koopman_{mn}^{\timestep}\varphi & \xrightarrow{n\to\infty}\koopman_{m}^{\timestep}\varphi, 
        \\
        \Lie_{mn}^{\timestep}\varphi & \xrightarrow{n\to\infty}\Lie_{m}^{\timestep}\varphi. 
    \end{align*}
    \vspace*{-\abovedisplayskip}
    Moreover,
    \vspace*{-\abovedisplayshortskip}
        \begin{align*}
            \koopman_{m}^{\timestep}\varphi & \in\Argmin_{u\in\dictionarySpan}\|u-\koopman^{\timestep}\varphi\|_{L_{\mu}^{2}},
            \\
            \Lie_{m}^{\timestep}\varphi & \in\Argmin_{u\in\dictionarySpan}\|u-\Lie^{\timestep}\varphi\|_{L_{\mu}^{2}}.
        \end{align*}
\end{theorem}

\begin{proof}
    The almost-sure convergence of $\koopman_{mn}^{\timestep}\varphi$ to $\koopman_{m}^{\timestep}\varphi$ follows upon applying \cref{e:AB-convergence} and \cref{lem:pseudoinversion} to \cref{e:Kmnh-identity}. Then, by \cref{e:Lmnh-def}, $\Lie_{mn}^{\timestep}\varphi$  converges almost surely to $\Lie_{m}^{\timestep}\varphi$.
   
    That $\koopman_{m}^{\timestep}\varphi$ is an $L_{\mu}^{2}$-orthogonal projections of $\koopman^{\timestep}\varphi$ is a standard calculation (see, e.g. \cite[Theorem~1]{eDMDconv}). To obtain the equivalent result for $\Lie_{m}^{\timestep}\varphi$, observe that
    \begin{equation*}
        \timestep^{-1}\left(A^{\timestep}B^{\dagger} - \Theta_m\right) = 
        \timestep^{-1}\left(A^{\timestep} - \Theta_m B\right) B^{\dagger}
        + \timestep^{-1}\Theta_m \left(B B^{\dagger} - I\right)
    \end{equation*}
    and that
    \begin{align}
        \timestep^{-1}\left(A^{\timestep} - \Theta_m B\right)
        &= \frac{1}{\timestep}\int \left[ \vec{\phi}(t+\timestep,y)\vec{\psi}(t,x)^{\top} - \Theta_m\vec{\psi}(t,x)\vec{\psi}(t,x)^{\top} \right]\,d\rho(t,x,y)
        \nonumber\\
        &=
        \int \frac{ \vec{\phi}(t+\timestep,y) - \vec{\phi}(t,x) }{\timestep} \, \vec{\psi}(t,x)^{\top}\,d\rho(t,x,y)
        \nonumber\\ 
        &=
        \int\frac{\koopman^\timestep\vec{\phi}(t,x) - \vec{\phi}(t,x)}{\timestep}  \, \vec{\psi}(t,x)^{\top}\,d\mu(t,x)
        \nonumber\\
        &=
        \int \Lie^\timestep\!\vec{\phi}(t,x) \, \vec{\psi}(t,x)^{\top}\,d\mu(t,x).
        \label{e:D-matrix-identity}
    \end{align}
    Set
    \begin{subequations}
        \begin{align}
            \label{e:xi-func-def}
            \xi_\varphi(t,x):= \vec{c}\cdot \timestep^{-1}\left(A^{\timestep} - \Theta_m B\right)\vec{\psi}(t,x),
            \\
            \eta_\varphi(t,x) := \vec{c}\cdot \timestep^{-1}\Theta_{m}(BB^{\dagger}-I)\vec{\psi}(t,x),
            \label{e:eta-func-def}
        \end{align}
    \end{subequations}
    so $\Lie_{m}^{\timestep}\varphi = \xi_\varphi + \eta_\varphi$.
    A calculation similar to that in \cite[Theorem~1]{eDMDconv} reveals that 
    \begin{equation*}
        \xi_\varphi \in \Argmin_{u\in\dictionarySpan}\|u-\Lie^{\timestep}\varphi\|_{L_{\mu}^{2}},
    \end{equation*}
    so we only need to show that $\eta_\varphi$ vanishes almost everywhere on $\supp\mu$. For this, note that $B$ is positive semidefinite, so 
    $B=V_{+}E_{+}V_{+}^{\top}$where $E_{+}$ is the diagonal matrix of positive eigenvalues and $V_{+}$ is the corresponding matrix of orthonormal eigenvectors. Writing $V_{-}$ for the matrix of eigenvectors of $B$ with zero eigenvalue, we find that $BB^{\dagger}-I= - V_{-}V_{-}^{\top}$ is a projection onto the kernel of $B$. This space is orthogonal to $\vec{\psi}(t,x)$ for almost every $(t,x)\in\supp\mu$ since
    \begin{equation*}
        \int\|V_{-}^{\top}\vec{\psi}\|^{2}\,d\mu=
        \trace \left[ V_{-}^{\top}\left(\int\vec{\psi}\vec{\psi^{\top}}\,d\mu\right)V_{-} \right]=
        \trace\left[V_{-}^{\top}BV_{-} \right]=0.
    \end{equation*}
    We conclude that $(BB^{\dagger}-I)\vec{\psi}$, hence $\eta_\varphi$,
    vanishes almost everyewhere on $\supp\mu$.
\end{proof}

\subsubsection{The infinite-sampling-rate limit}\label{s:InfSample}

We now turn to the limit of infinite sampling rate, when $\timestep\to0$.
Recall from \cref{ss:orth-proj} that $\mathcal{P}_{m}^{\mu}$, the $L_{\mu}^{2}$-orthogonal projection operator onto $\dictionarySpan$, is linear and satisfies $\|\mathcal{P}_{m}^{\mu}f\|_{L_{\mu}^{2}}\leq\|f\|_{L_{\mu}^{2}}$. Then, we deduce from \cref{thm:inf-data-limit} that 
\begin{align}
    \lim_{\timestep\to0}
    \left\|
        \Lie_{m}^{\timestep}\varphi 
        - \mathcal{P}_{m}^{\mu}\Lie\varphi
    \right\|_{L_{\mu}^{2}} 
    &=
    \lim_{\timestep\to0}
    \left\| 
        \mathcal{P}_{m}^{\mu}\Lie^{\timestep}\varphi
        - \mathcal{P}_{m}^{\mu}\Lie\varphi
    \right\|_{L_{\mu}^{2}}
    \nonumber \\
    &\leq
    \lim_{\timestep\to0}
    \left\| \Lie^{\timestep}\varphi - \Lie\varphi \right\|_{L_{\mu}^{2}}
    \nonumber \\
    &=0.
    \label{e:zero-timestep-initial}
\end{align}
The last equality follows because, by definition, $\Lie^{\timestep}\varphi$ converges uniformly to $\Lie\varphi$ on the whole space $\timeSet\times\mathbb{X}$, hence in particular on the support of the measure $\mu$. 

These simple steps show that, as $\timestep\to0$, the function $\Lie_{m}^{\timestep}\varphi$ converges in $L_{\mu}^{2}$ to a projection of the exact Lie derivative $\Lie\varphi$ onto $\dictionarySpan$. However, as remarked at the end of \cref{ss:orth-proj}, this projection need not be unique as an element of $\dictionarySpan$. It is therefore natural to ask whether $\Lie_{m}^{\timestep}\varphi$ converges to a \emph{particular} projection of $\Lie\varphi$. To answer this question, one must study the pointwise behaviour of $\Lie_{m}^{\timestep}\varphi$. 

As a candidate for the pointwise limit of $\Lie_{m}^{\timestep}\varphi$, we take the function $\mathcal{G}_m \varphi$ defined for every $\varphi=\vec{c}\cdot\vec{\phi}$ in $\afSpan$ by
\begin{equation}\label{e:Gm-def}
    \mathcal{G}_m \varphi(t,x) := \vec{c}\cdot C B^\dagger \vec{\psi}(t,x),
\end{equation}
where
\begin{equation*}
    C := \int \Lie\vec{\phi}(t,x) \, \vec{\psi}(t,x)^\top\,d\mu(t,x) \in \R^{\ell \times m}.
\end{equation*}
This choice is motivated by a connection with approximate Lie derivatives built using generator EDMD \cite{Klus2020}, which will be discussed in more detailed in \cref{ss:gEDMD}. A calculation similar to that in \cite[Theorem~1]{eDMDconv} confirms that $\mathcal{G}_m \varphi$ is indeed an $L^2_\mu$-orthogonal projection of $\Lie\varphi$.

\begin{lemma}\label[lemma]{lem:Gm-projection}
    For every $\varphi \in \afSpan \cap \mathcal{D}(\Lie)$, there holds
    \begin{equation*}
        \mathcal{G}_m \varphi \in \Argmin_{u\in\dictionarySpan}\|u-\Lie\varphi\|_{L_{\mu}^{2}}.
    \end{equation*}
\end{lemma}

This statement and the inequalities in \cref{e:zero-timestep-initial} imply that $\Lie_{m}^{\timestep}\varphi$ converges to $\mathcal{G}_m \varphi$ pointwise almost everywhere on the support of the data sampling measure $\mu$. However, from \cref{thm:inf-data-limit} we can actually deduce the following more precise result.

\begin{theorem}
\label{thm:h-to-zero}
    Let $\varphi\in\afSpan$ be represented as $\varphi=\vec{c}\cdot\vec{\phi}$ for $\vec{c}\in\mathbb{R}^{\ell}$. If \cref{ass:dictionary} is satisfied, then for every $(t,x) \in\timeSet \times \stateSet$ there holds
    \begin{equation*}
        \lim_{\timestep\to0}\Lie_{m}^{\timestep}\varphi(t,x)=
        \begin{cases}
            \mathcal{G}_{m}\varphi(t,x)
            &\text{if }\vec{c}\cdot\Theta_{m}(BB^{\dagger}-I)\vec{\psi}(t,x)=0,
            \\
            \infty 
            &\text{otherwise.}
    \end{cases}
    \end{equation*}
\end{theorem}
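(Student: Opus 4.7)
The plan is to work directly with the explicit expression
\begin{equation*}
    \Lie_m^\timestep\varphi(t,x)
    =\vec{c}\cdot D^\timestep B^\dagger \vec{\psi}(t,x)
    + \timestep^{-1}\,\vec{c}\cdot\Theta_m(BB^\dagger - I)\vec{\psi}(t,x)
\end{equation*}
provided by \cref{thm:inf-data-limit}, analyzing the two terms separately. The coefficient of $\timestep^{-1}$ in the second term does not depend on $\timestep$, so the case distinction in the theorem will follow at once from whether this coefficient vanishes at the point $(t,x)$ under consideration.

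First, I would show that the first term has the finite pointwise limit $\mathcal{G}_m\varphi(t,x)$. To this end, I would rewrite $D^\timestep$ as an integral against $\mu$ alone by disintegrating $d\rho(t,x,y)=d\nu_{t,x}(y)\,d\mu(t,x)$ and recognizing the inner integral as a finite difference of the Koopman operator:
\begin{equation*}
    D^\timestep
    =\int \Lie^\timestep\vec{\phi}(t,x)\,\vec{\psi}(t,x)^\top\,d\mu(t,x).
\end{equation*}
The standing assumption $\afSpan\subseteq\mathcal{D}(\Lie)$ ensures that each $\Lie^\timestep\phi_i$ converges uniformly on $\timeSet\times\stateSet$ to $\Lie\phi_i$. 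Combined with the boundedness of $\vec{\psi}\subset C_b(\timeSet\times\stateSet)$ and the finiteness of $\mu$, this yields $D^\timestep\to C$ entry-wise, and hence $\vec{c}\cdot D^\timestep B^\dagger\vec{\psi}(t,x)\to\vec{c}\cdot CB^\dagger\vec{\psi}(t,x)=\mathcal{G}_m\varphi(t,x)$ at every point.

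Second, I would handle the $\timestep^{-1}$-term by a direct dichotomy. If $\vec{c}\cdot\Theta_m(BB^\dagger-I)\vec{\psi}(t,x)=0$, the term is identically zero for all $\timestep>0$ and the overall limit equals $\mathcal{G}_m\varphi(t,x)$. Otherwise, the term is a nonzero constant divided by $\timestep$, so its modulus grows without bound as $\timestep\to 0^+$ and dominates the bounded contribution from the first term; interpreting the ``$\infty$'' in the statement as $\lvert\Lie_m^\timestep\varphi(t,x)\rvert\to\infty$, the conclusion follows (with the sign of divergence determined by the sign of the constant).

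The only step that is not purely algebraic is the convergence $D^\timestep\to C$; I expect this to be the main technical point, although it amounts to a bounded-convergence argument keyed on the uniform convergence $\Lie^\timestep\vec{\phi}\to\Lie\vec{\phi}$ that is built into the very definition of $\mathcal{D}(\Lie)$. Every other step is a manipulation of the explicit formula from \cref{thm:inf-data-limit}.
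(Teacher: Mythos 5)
Your proposal is correct and follows essentially the same route as the paper's proof: split $\Lie_m^\timestep\varphi$ into the $D^\timestep B^\dagger$ part and the $\timestep^{-1}\Theta_m(BB^\dagger-I)$ part, use the uniform convergence $\Lie^\timestep\to\Lie$ built into $\mathcal{D}(\Lie)$ to get $D^\timestep\to C$, and conclude by the dichotomy on whether the $\timestep$-independent coefficient of $\timestep^{-1}$ vanishes. Your disintegration of $\rho$ to rewrite $D^\timestep$ as $\int\Lie^\timestep\vec{\phi}\,\vec{\psi}^\top d\mu$ simply makes explicit a step the paper asserts without comment.
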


\begin{remark}
    This result includes the anticipated $\mu$-almost-everywhere convergence of $\Lie_{m}^{\timestep}\varphi$ to $\mathcal{G}_m \varphi$ as a particular case because, as shown in the proof of \cref{thm:inf-data-limit}, the function $\eta_\varphi = \vec{c}\cdot\Theta_{m}(BB^{\dagger}-I)\vec{\psi}$ vanishes almost everywhere on the support of $\mu$.
\end{remark}

\begin{proof}[Proof of \cref{thm:h-to-zero}]
    As in the proof of \cref{thm:inf-data-limit}, write $\Lie_{m}^{\timestep}\varphi = \xi_\varphi + \eta_\varphi$ where the functions $\xi_\varphi$ and $\eta_\varphi$ are defined in \cref{e:xi-func-def,e:eta-func-def}.
    Since $\Lie^{\timestep}\varphi \to \Lie\varphi$ uniformly on $\timeSet\times\stateSet$ as $\timestep\to0$ by definition of the Lie derivative, we can let $\timestep\to 0$ in \cref{e:D-matrix-identity} to find that
    \begin{equation*}
        \timestep^{-1}\left( A^\tau - \Theta_m B\right) = 
        \int (\Lie^\timestep\!\vec{\phi}) \, \vec{\psi}^{\top}\,d\mu
        \xrightarrow{\timestep \to 0}
        \int (\Lie\vec{\phi}) \, \vec{\psi}^{\top}\,d\mu = C.
    \end{equation*}
    We then conclude from \cref{e:xi-func-def} that $\xi_\varphi \to \mathcal{G}_m \varphi$ pointwise as $\timestep \to 0$. Thus, for every $(t,x)\in\timeSet\times\stateSet$, 
    \begin{align*}
        \lim_{\timestep\to0}\Lie_{m}^{\timestep}\varphi(t,x)
        &= 
        \lim_{\timestep\to0}\left[ \xi_\varphi(t,x)+ \eta_\varphi(t,x) \right]
        = 
        \mathcal{G}_m\varphi(t,x) 
        + 
        \lim_{\timestep\to0} \eta_\varphi(t,x)
        .
    \end{align*}
    The last limit is finite if and only if $\eta_\varphi(t,x) = \vec{c}\cdot\Theta_{m}(BB^{\dagger}-I)\vec{\psi}(t,x)=0$.
\end{proof}

An immediate consequence of \cref{thm:h-to-zero} is that $\Lie_{m}^{\timestep}\varphi\to\mathcal{G}_{m}\varphi$ pointwise as $\timestep \to 0$ if $B=\int\vec{\psi}\vec{\psi}^{\top}\,d\mu$ is invertible. This is true if and only if the following condition is met. 

\begin{assumption}\label{ass:psi-mu-incompatibility}
    If $u\in\dictionarySpan$ vanishes $\mu$-almost-everywhere, then $u\equiv 0$.
\end{assumption}

\begin{corollary}
    \label[corollary]{cor:h-to-zero-pointwise-conv}
    Under \cref{ass:dictionary,ass:psi-mu-incompatibility},
    $\Lie_{m}^{\timestep}\varphi\to\mathcal{G}_{m}\varphi$
    pointwise on $\timeSet\times\mathbb{X}$. 
\end{corollary}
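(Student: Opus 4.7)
The plan is to reduce the corollary to the pointwise dichotomy in \cref{thm:h-to-zero}. That theorem tells us that at each point $(t,x)$ the limit $\lim_{\timestep\to 0}\Lie_{m}^{\timestep}\varphi(t,x)$ equals $\mathcal{G}_{m}\varphi(t,x)$ precisely when the residual
\[
h(t,x) := \vec{c}\cdot\Theta_{m}(BB^{\dagger}-I)\vec{\psi}(t,x)
\]
vanishes, and is $\pm\infty$ otherwise. So the entire task reduces to showing that, under \cref{ass:psi-mu-incompatibility}, we have $h\equiv 0$ on all of $\timeSet\times\stateSet$.

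First I would observe that $h\in\dictionarySpan$. This is immediate: the row vector $\vec{c}^{\top}\Theta_{m}(BB^{\dagger}-I)\in\R^{m}$ is independent of $(t,x)$, so $h$ is a fixed linear combination of $\psi_{1},\ldots,\psi_{m}$. Next I would recycle the calculation already carried out at the end of the proof of \cref{thm:inf-data-limit}: writing $BB^{\dagger}-I=-V_{-}V_{-}^{\top}$ for the (negative) orthogonal projection onto $\ker B$, one has
\[
\int \|V_{-}^{\top}\vec{\psi}\|^{2}\,d\mu = V_{-}^{\top}B V_{-} = 0,
\]
so $(BB^{\dagger}-I)\vec{\psi}$ vanishes $\mu$-almost everywhere, and therefore so does $h$.

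Combining these two facts, \cref{ass:psi-mu-incompatibility} applies to $h$ and forces $h\equiv 0$ pointwise on $\timeSet\times\stateSet$. This is exactly the condition needed to conclude from \cref{thm:h-to-zero} that $\Lie_{m}^{\timestep}\varphi(t,x)\to\mathcal{G}_{m}\varphi(t,x)$ at every $(t,x)$, yielding the claimed pointwise convergence.

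The only mildly subtle step — and really the only place where there is anything to notice — is that we do \emph{not} need to upgrade \cref{ass:psi-mu-incompatibility} to the full invertibility of $B$ (which would fail, e.g., if the $\psi_i$ are linearly dependent). It suffices that this single element $h$ of $\dictionarySpan$ be $\mu$-a.e.\ zero and then get promoted to pointwise zero by the assumption. Everything else is bookkeeping, and the heavy lifting has already been done in \cref{thm:inf-data-limit,thm:h-to-zero}.
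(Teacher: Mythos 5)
Your proof is correct, and it takes a slightly different route from the paper's. The paper disposes of this corollary in one line before stating \cref{ass:psi-mu-incompatibility}: it observes that \cref{thm:h-to-zero} gives pointwise convergence whenever $B$ is invertible (so that $BB^{\dagger}-I=0$ identically), and then asserts that invertibility of $B$ is equivalent to \cref{ass:psi-mu-incompatibility}. You instead never pass through invertibility of $B$: you note that the residual $h=\vec{c}\cdot\Theta_{m}(BB^{\dagger}-I)\vec{\psi}$ is itself an element of $\dictionarySpan$, recycle the computation from the proof of \cref{thm:inf-data-limit} showing that $(BB^{\dagger}-I)\vec{\psi}$ vanishes $\mu$-almost everywhere, and then let \cref{ass:psi-mu-incompatibility} promote $h=0$ from $\mu$-a.e.\ to everywhere. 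Both arguments reduce the corollary to the dichotomy of \cref{thm:h-to-zero}, but yours is the more robust of the two: the paper's equivalence between \cref{ass:psi-mu-incompatibility} and invertibility of $B$ tacitly assumes the functions $\psi_{1},\ldots,\psi_{m}$ are linearly independent (otherwise $B$ is singular while the assumption can still hold), whereas your argument needs no such hypothesis. Your closing remark correctly identifies this as the one genuinely non-routine point. The only cosmetic note is that the paper writes $BB^{\dagger}-I=V_{-}V_{-}^{\top}$ while you write $-V_{-}V_{-}^{\top}$; your sign is the correct one, and the discrepancy is immaterial since only the vanishing of $V_{-}^{\top}\vec{\psi}$ is used.
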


\Cref{ass:psi-mu-incompatibility} is common in the EDMD literature (see also \cite[Assumption 1]{eDMDconv}) and holds, for instance, when
$\mathbb{X}=\mathbb{R}^{d}$, $\vec{\psi}$ is a
polynomial dictionary, and the support of the data sampling measure $\mu$ is not an algebraic set. This is true for the example involving the van der Pol oscillator in \cref{sec:vdp}, where pointwise convergence is indeed observed. A different oscillator example for which, instead, \cref{ass:psi-mu-incompatibility} and pointwise convergence fail is presented in \cref{sec:CircularOrbit}.

\subsubsection{The infinite EDMD dictionary limit}
\label{s:InfDictionary}

We finally turn to studying how the approximate Lie derivative $\Lie_{mn}^{\timestep}\varphi$ behaves as the approximation space $\dictionarySpan$ is enlarged. Precisely, we replace a fixed dictionary $\vec{\psi}$ with a sequence $\{\vec{\psi}^{m}\}_{m\geq \ell}$ of dictionaries of increasing size $m$. 

Our first (standard) result is that approximate Lie derivatives become increasingly accurate if the sequence $\{\vec{\psi}^{m}\}_{m\geq \ell}$ has the following approximation property.
\begin{assumption}\label{ass:approx-property}
    For every $u \in L^2_\mu(\timeSet\times\stateSet)$, there exists $u_m \in \dictionarySpan^{m}$ such that the sequence $\{u_m\}_{m\geq \ell}$ converges to $u$ in  $L^2_\mu$.
\end{assumption}
\noindent
Observe that this assumption does not require the inclusion $\dictionarySpan^{m} \subset \dictionarySpan^{m+1}$, even though this is often true in practice. This inclusion fails, for example, if $\{\vec{\psi}^{m}\}_{m\geq \ell}$ is a sequence of finite-element bases on increasingly fine but not nested meshes.
\begin{theorem}\label{thm:conv-m-infty-L2}
    Suppose \cref{ass:weak-convergence} holds and that the dictionaries $\{\vec{\psi}^{m}\}_{m\geq \ell}$ satisfy \cref{ass:dictionary,ass:approx-property}. Then,
	\begin{align*}
        \lim_{m\to\infty} \lim_{\timestep\to 0} \lim_{n\to\infty} \left\| \Lie^{\timestep}_{mn}\varphi - \Lie\varphi \right\|_{L^2_\mu}
        = \lim_{\timestep\to 0} \lim_{m\to\infty} \lim_{n\to\infty} \left\| \Lie^{\timestep}_{mn}\varphi - \Lie\varphi \right\|_{L^2_\mu}
        = 0.
	\end{align*}
	In particular, $\Lie^{\timestep}_{mn}\varphi(t,x)\to\Lie\varphi(t,x)$ for $\mu$-almost-every $(t,x)\in \timeSet\times\stateSet$.
\end{theorem}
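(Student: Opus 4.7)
The plan is to strip off the three limits one at a time, starting with the innermost $n\to\infty$ via \cref{thm:inf-data-limit}, and then to handle the two orderings of the remaining $\timestep\to 0$ and $m\to\infty$ limits by symmetric applications of the triangle inequality combined with the contraction property of the $L^2_\mu$-orthogonal projection $\proj{m}{\mu}$ onto $\dictionarySpan^m$. For arbitrary fixed $m$ and $\timestep$, \cref{thm:inf-data-limit} gives $\Lie^{\timestep}_{mn}\varphi \to \proj{m}{\mu}\Lie^\timestep\varphi$ in any norm on $\dictionarySpan^m$ almost surely, so in particular
\begin{equation*}
    \lim_{n\to\infty} \|\Lie^{\timestep}_{mn}\varphi - \proj{m}{\mu}\Lie^\timestep\varphi\|_{L^2_\mu} = 0 \quad \text{almost surely}.
\end{equation*}
Moreover, since $\varphi\in\mathcal{D}(\Lie)$, $\Lie^\timestep\varphi\to\Lie\varphi$ uniformly on $\timeSet\times\stateSet$ and hence in $L^2_\mu$ because $\mu$ is a probability measure; in particular $\Lie\varphi\in L^2_\mu$. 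It therefore suffices to prove that $\|\proj{m}{\mu}\Lie^\timestep\varphi - \Lie\varphi\|_{L^2_\mu}$ tends to zero under each of the two prescribed iterated limits.

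For the first iterated limit (inner $\timestep\to 0$, outer $m\to\infty$), I would insert $\pm \proj{m}{\mu}\Lie\varphi$ and use contractivity, $\|\proj{m}{\mu} f\|_{L^2_\mu} \leq \|f\|_{L^2_\mu}$, to obtain
\begin{equation*}
    \|\proj{m}{\mu}\Lie^\timestep\varphi - \Lie\varphi\|_{L^2_\mu}
    \leq
    \|\Lie^\timestep\varphi - \Lie\varphi\|_{L^2_\mu}
    + \|\proj{m}{\mu}\Lie\varphi - \Lie\varphi\|_{L^2_\mu}.
\end{equation*}
The first term vanishes as $\timestep\to 0$ for every $m$, and the second is independent of $\timestep$ and vanishes as $m\to\infty$ by the best-approximation property of $\proj{m}{\mu}$ combined with \cref{ass:approx-property} applied to $\Lie\varphi$. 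For the second iterated limit (inner $m\to\infty$, outer $\timestep\to 0$), the symmetric splitting
\begin{equation*}
    \|\proj{m}{\mu}\Lie^\timestep\varphi - \Lie\varphi\|_{L^2_\mu}
    \leq
    \|\proj{m}{\mu}\Lie^\timestep\varphi - \Lie^\timestep\varphi\|_{L^2_\mu}
    + \|\Lie^\timestep\varphi - \Lie\varphi\|_{L^2_\mu}
\end{equation*}
works: for fixed $\timestep$ the first term vanishes as $m\to\infty$ by \cref{ass:approx-property} applied to $\Lie^\timestep\varphi\in L^2_\mu$, and the second vanishes as $\timestep\to 0$.

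The pointwise $\mu$-almost-everywhere statement follows by extracting a diagonal subsequence $(m_k,\timestep_k,n_k)$ along which $\|\Lie^{\timestep_k}_{m_k n_k}\varphi - \Lie\varphi\|_{L^2_\mu}\to 0$, and then passing to a further subsequence on which this convergence holds $\mu$-a.e. No serious obstacle is anticipated; the only point requiring mild care is that the dictionaries $\dictionarySpan^m$ are not assumed nested, which rules out any monotone-convergence-of-projections shortcut. This is precisely why \cref{ass:approx-property} is phrased as an existence statement and why the argument is driven by the best-approximation property of $\proj{m}{\mu}$ rather than by iterated projections.
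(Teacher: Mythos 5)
Your proof is correct and follows essentially the same route as the paper: both reduce to $\proj{m}{\mu}\Lie^\timestep\varphi$ via \cref{thm:inf-data-limit} and then combine the contraction and best-approximation properties of the projection with \cref{ass:approx-property} and the uniform convergence $\Lie^\timestep\varphi\to\Lie\varphi$. The only cosmetic difference is that the paper uses a single three-term triangle-inequality split valid for both orderings of the limits, whereas you use two tailored two-term splits; your explicit subsequence argument for the $\mu$-a.e.\ statement is if anything slightly more careful than the paper's.
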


\begin{proof}
    Recall from \cref{thm:inf-data-limit} that $\Lie^{\timestep}_m\varphi = \proj{m}{\mu}\Lie^\timestep\varphi$. Recall also from \cref{ss:orth-proj} that $\proj{m}{\mu}$ is a linear operator such that $\|\proj{m}{\mu}f\|_{L^2_\mu} \leq \|f\|_{L^2_\mu}$ and $\|\proj{m}{\mu}f-f\|_{L^2_\mu} \leq \|u-f\|_{L^2_\mu}$ for every $f$ and $u$. Given functions $u_m \in \dictionarySpan^{m}$ converging to $\Lie\varphi$ in ${L^2_\mu}$, which exist by assumption, we can therefore use the triangle inequality to estimate
    \begin{align*}
        \left\| \Lie^{\timestep}_{mn}\varphi - \Lie\varphi \right\|_{L^2_\mu}
		&\leq
		\left\| \Lie^{\timestep}_{mn}\varphi - \Lie^{\timestep}_m\varphi \right\|_{L^2_\mu}
		+ \left\| \Lie^{\timestep}_m\varphi - \proj{m}{\mu}\Lie\varphi \right\|_{L^2_\mu}
		+ \left\| \proj{m}{\mu}\Lie\varphi - \Lie\varphi \right\|_{L^2_\mu}.
        \\
		&\leq
		\left\| \Lie^{\timestep}_{mn}\varphi - \Lie^{\timestep}_m\varphi \right\|_{L^2_\mu}
		+ \left\| \Lie^{\timestep}\varphi - \Lie\varphi \right\|_{L^2_\mu}
		+ \left\| u_m - \Lie\varphi \right\|_{L^2_\mu}.
	\end{align*}
    The first term on the right-hand side vanishes as $n\to\infty$ by \cref{thm:inf-data-limit}. The other two terms vanish as $\timestep\to0$ and $m\to\infty$ because, by definition, $\Lie^{\timestep} \varphi \to \Lie \varphi$ uniformly and $u_m \to \Lie\varphi$ in $L^2_\mu$. These two limits can clearly be taken in any order.
\end{proof}

It would of course be desirable to complement \cref{thm:conv-m-infty-L2} with explicit convergence rates, but we do not pursue this here because the answer depends on the particular choices for the dictionaries $\vec{\phi}$, $\vec{\psi}^m$ and for the data sampling strategy (see also the discussion after \cref{ass:weak-convergence}). Interested readers can find an example of what can be achieved in~\cite{Zhang2022}, which estimates convergence rates for the EDMD-based identification of deterministic continuous-time systems.
Instead, to fully justify the good performance of approximate Lie derivatives in the examples of \cref{sec:FirstEx}, we study in more detail the special case in which every $\varphi\in\afSpan\cap\mathcal{D}(\Lie)$ satisfies $\Lie\varphi\in\dictionarySpan^m$ for all large enough $m$. This assumption is usually hard to verify in practice. When it holds, however, one recovers $\mathcal{L\varphi}$ pointwise on the full space as long as the dictionaries $\vec{\psi}^m$ also satisfy \cref{ass:psi-mu-incompatibility}.

\begin{theorem}\label{thm:conv-m-infty-poitwise}
    Suppose \cref{ass:weak-convergence} holds and that the dictionaries $\vec{\phi}$ and $\vec{\psi}^m$ satisfy \cref{ass:dictionary} for all $m$.
    Suppose also there exists $m_0\geq\ell$ such that, for every $m\geq m_0$:
    \begin{enumerate}[a{\rm)}, noitemsep, topsep=0.5ex]
        \item\label{m-conv-1}
        $\Lie\varphi\in\dictionarySpan^m$ for every $\varphi\in\afSpan$.
        \item\label{m-conv-2}
        If $u \in \dictionarySpan^{m}$ vanishes $\mu$-almost-everywhere, then $u\equiv0$.
    \end{enumerate}
    Then, for every $\varphi\in\afSpan$ 
    and every $m \geq m_0$,
    \begin{equation*}
        \lim_{\timestep\to 0}\lim_{n\to\infty} \Lie^{\timestep}_{mn}\varphi = 
        \Lie\varphi\qquad \text{pointwise on }\timeSet\times\mathbb{X}.
    \end{equation*}
\end{theorem}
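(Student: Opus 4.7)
The plan is to chain together the two previous limit theorems, using the two hypotheses to pass from $L^2_\mu$ statements to pointwise ones, and then exploit that $\Lie\varphi$ already lives in $\dictionarySpan^m$ to identify the limit as the exact Lie derivative.

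First, I would fix $m \geq m_0$ and $\timestep>0$ and take the inner limit $n\to\infty$. By \cref{thm:inf-data-limit}, $\Lie^{\timestep}_{mn}\varphi \to \Lie^{\timestep}_m\varphi$ almost surely in any norm on $\dictionarySpan^m$, and the limit $\Lie^{\timestep}_m\varphi$ is an $L^2_\mu$-orthogonal projection of $\Lie^\timestep\varphi$ onto $\dictionarySpan^m$. Next, hypothesis 2 is exactly \cref{ass:psi-mu-incompatibility} applied to each $\dictionarySpan^m$ with $m\geq m_0$, so \cref{cor:h-to-zero-pointwise-conv} gives the pointwise convergence
\begin{equation*}
    \lim_{\timestep\to 0} \Lie^{\timestep}_m\varphi(t,x) = \mathcal{G}_m\varphi(t,x)
    \qquad \forall (t,x)\in\timeSet\times\stateSet.
\end{equation*}

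It then remains to show that $\mathcal{G}_m\varphi = \Lie\varphi$ pointwise on $\timeSet\times\stateSet$. By hypothesis 1, $\Lie\varphi \in \dictionarySpan^m$, so $\Lie\varphi$ itself achieves the value zero in the minimization problem defining $\mathcal{G}_m\varphi$ in \cref{thm:inf-data-limit}. Hence $\|\mathcal{G}_m\varphi - \Lie\varphi\|_{L^2_\mu}=0$, i.e., the two functions agree $\mu$-almost everywhere. Their difference lies in $\dictionarySpan^m$, so hypothesis 2 upgrades this to equality on all of $\timeSet\times\stateSet$. Combining the three displays yields $\lim_{\timestep\to 0}\lim_{n\to\infty}\Lie^\timestep_{mn}\varphi(t,x) = \Lie\varphi(t,x)$ pointwise, as claimed.

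The only delicate point is precisely this last upgrade from $\mu$-almost-everywhere to pointwise equality; without hypothesis 2 the projection $\mathcal{G}_m\varphi$ is only uniquely determined as an element of $L^2_\mu$, and off the support of $\mu$ it could in principle differ from $\Lie\varphi$. Everything else is a direct invocation of \cref{thm:inf-data-limit} and \cref{cor:h-to-zero-pointwise-conv}, with no further estimates needed.
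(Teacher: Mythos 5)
Your proposal is correct and follows essentially the same route as the paper's proof: invoke \cref{thm:inf-data-limit} for the $n\to\infty$ limit, \cref{cor:h-to-zero-pointwise-conv} (via hypothesis 2) for the $\timestep\to 0$ limit, and then use hypothesis 1 to conclude $\mathcal{G}_m\varphi=\Lie\varphi$ on $\supp\mu$, upgraded to all of $\timeSet\times\stateSet$ by hypothesis 2. The "delicate point" you flag at the end is exactly the step the paper also singles out, so nothing is missing.
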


\begin{proof}
    \Cref{thm:inf-data-limit,cor:h-to-zero-pointwise-conv} guarantee that $\Lie^{\timestep}_{mn}\varphi$ converges to the function $\mathcal{G}_m\varphi$ pointwise on $\timeSet\times\mathbb{X}$ as $n\to\infty$ and $\timestep\to0$. By \cref{lem:Gm-projection}, $\mathcal{G}_m\varphi$ minimizes $\|u-\Lie\varphi\|_{L^2_\mu}$ over all $u\in\dictionarySpan^{m}$. Then, since $\Lie\varphi\in\dictionarySpan^{m}$ by assumption \ref{m-conv-1}, we must have $\mathcal{G}_m\varphi=\Lie\varphi$ on the support of $\mu$. This implies $\mathcal{G}_m\varphi=\Lie\varphi$ on $\timeSet\times\mathbb{X}$ by assumption \ref{m-conv-2}.
\end{proof}

\subsection{Comparison to generator EDMD}
\label{ss:gEDMD}

The approximate Lie derivative operator $\Lie_{mn}^\timestep$ may be viewed as a difference quotient approximation for the generator of the (approximate) Koopman operator, where the timestep $\timestep > 0$ is determined by the rate at which data are sampled. If one can sample directly the Lie derivatives $\Lie\vec{\phi}$ of the elements in the dictionary $\vec{\phi}$, the difference quotient approximation can be avoided by using \textit{generator EDMD} (gEDMD) \cite{Klus2020}. Precisely, if the data snapshots $(t_{i},x_{i},y_{i})_{i=1}^{n}$ satisfy $y_{i}=\Lie\vec{\phi}(t_{i,}x_{i})$, one can build the data matrix
\begin{equation*}
    \Lambda_{n}:=\begin{bmatrix}| &  & |\\
    \Lie\vec{\phi}(t_{1},x_{1}) & \cdots & \Lie\vec{\phi}(t_{n},x_{n})\\
    | &  & |
    \end{bmatrix}
    \in \R^{\ell \times n}
\end{equation*}
and define an approximate Lie derivative operator $\mathcal{G}_{mn}:\afSpan\to\dictionarySpan$ by setting, for every $\varphi=\vec{c}\cdot\vec{\phi}$,
\begin{equation}
    \mathcal{G}_{mn}\varphi:=\vec{c}\cdot G_{mn}\vec{\psi}
    \qquad\text{where}\qquad
    G_{mn}:=\Lambda_{n}\Psi_{n}^{\dagger}.
    \label{e:Gmnh-def}
\end{equation}

One expects $\mathcal{G}_{mn}\varphi$ to approximate $\Lie\varphi$ because the $\ell\times m$ matrix $G_{mn}$ minimizes the least-squares error $\|\Lambda_{n}-G\Psi_{n}\|_{F}$. This expectation was justified theoretically in \cite{Klus2020} for stochastic processes governed by stochastic differential equations, and the results can be extended without much effort to the general class of Markov processes described in \cref{s:setup}. In particular, for the infinite-data limit we have the following analogue of \cref{thm:h-to-zero}, where $\mathcal{G}_m\varphi$ is defined as in \cref{e:Gm-def}.

\begin{theorem}\label{th:inf-data-gedmd}
    If \cref{ass:dictionary,ass:weak-convergence} holds then, for every $\varphi\in\afSpan$,
    \begin{equation*}
        \mathcal{G}_{mn}\varphi 
        \xrightarrow{n\to\infty}
        \mathcal{G}_{m}\varphi
    \end{equation*}
    almost surely.
    Moreover, $\mathcal{G}_{m}$ is an $L^2_\mu$-orthogonal projection of $\Lie\varphi$ onto $\dictionarySpan$.
\end{theorem}
\begin{proof}
    We already established in \cref{lem:Gm-projection} that $\mathcal{G}_{m}$ is an $L^2_\mu$-orthogonal projection of $\Lie\varphi$ onto $\dictionarySpan$. For the almost sure convergence, note that $G_{mn} = (\Lambda_{n}\Psi_{n}^\top)(\Psi_{n}\Psi_{n}^\top)^{\dagger}$. \Cref{lem:pseudoinversion} guarantees that $n(\Psi_{n}\Psi_{n}^\top)^{\dagger} = B_n^\dagger \to B^\dagger$ almost surely as $n\to\infty$. Similarly, by \cref{ass:weak-convergence} we almost surely have that
    \begin{equation*}
        \frac1n (\Lambda_{n}\Psi_{n}^\top) = \frac1n \sum_{i=1}^n \Lie\vec{\phi}(t_i,x_i) \vec{\psi}(t_i,x_i)^\top \xrightarrow{n\to\infty} \int \Lie\vec{\phi} \,\vec{\psi}^\top \,d\mu = C.
    \end{equation*}
    Thus, $\mathcal{G}_{mn}\varphi = \vec{c}\cdot G_{mn}\vec{\psi}$ converges to $\vec{c}\cdot CB^\dagger \vec{\psi} = \mathcal{G}_{m}\varphi$ almost surely.
\end{proof}

We also have the following analogues of \cref{thm:conv-m-infty-L2,thm:conv-m-infty-poitwise} for the case where $\vec{\psi}$ is chosen from a sequence of EDMD dictionaries $\{\vec{\psi}^m\}_{m\geq\ell}$ of increasing size.
\begin{theorem}
    Under the same assumptions as \cref{thm:conv-m-infty-L2}, for every $\varphi\in\afSpan$ there holds
    \begin{align*}
        \lim_{m\to\infty} \lim_{n\to\infty} \left\| \mathcal{G}_{mn}\varphi - \Lie\varphi \right\|_{L^2_\mu}
        = 0.
	\end{align*}
\end{theorem}
\begin{theorem}
    Under the same assumptions as \cref{thm:conv-m-infty-poitwise}, for every $\varphi\in\afSpan$ and every $m \geq m_0$, we have $\mathcal{G}_{mn}\varphi \to 
    \Lie\varphi$ pointwise on $\timeSet\times\stateSet$ as $n\to\infty$.
\end{theorem}

\Cref{th:inf-data-gedmd} can be combined with our results about the infinite sampling rate from \cref{s:InfSample} to discover that, in the limit of infinite data, approximate Lie derivatives constructed using EDMD with finite timesteo $\timestep$ do not generally reduce to the approximations constructed using gEDMD as $\timestep$ is decreased. Instead, they do so only under suitable conditions, such as \cref{ass:psi-mu-incompatibility}.
The same considerations carry over to the case of finite data if the underlying dynamics are \emph{deterministic}. In this case, a straightforward calculation shows that
\begin{equation*}
   \timestep^{-1}\left( A_n^\timestep - \Theta_m B_n \right) = \frac1n\sum_{i=1}^n \left[ \frac{\koopman^\timestep \vec{\phi}(t_i,x_i) - \vec{\phi}(t_i,x_i)}{\timestep} \right] \vec{\psi}(t_i,x_i)^\top
   \;
   \xrightarrow{\timestep \to 0} 
   \;\frac{1}{n} \Lambda_n \Psi_n^\top.
\end{equation*}
Then, since we can write
$L_{mn}^\timestep = \timestep^{-1} \left( A_n^\timestep - \Theta_m B_n \right)B_n^\dagger + \timestep^{-1} \Theta_m \left( B_nB_n^\dagger - I \right)$ and $G_{mn}= n^{-1} \Lambda_n \Psi_n^\top B_n^\dagger$, we find for every $\varphi=\vec{c}\cdot\vec{\phi}$ that
\begin{align*}
    \lim_{\timestep\to0}\Lie_{mn}^{\timestep}\varphi 
    & =
    \lim_{\timestep\to0}
    \left[\vec{c}\cdot \timestep^{-1} \left( A_n^\timestep - \Theta_m B_n \right)B_{n}^{\dagger}\vec{\psi}\right]
    +\lim_{\timestep\to0}
    \left[\vec{c}\cdot \timestep^{-1}\Theta_{m}(B_{n}B_{n}^{\dagger}-I)\vec{\psi}\right]
    \\
    & =
    \vec{c}\cdot \left(n^{-1} \Lambda_n \Psi_n^\top  B_{n}^{\dagger} \right) \vec{\psi}
    +\lim_{\timestep\to0}
    \left[\vec{c}\cdot \timestep^{-1}\Theta_{m}(B_{n}B_{n}^{\dagger}-I)\vec{\psi}\right]
    \\
    & =
    \mathcal{G}_{mn}\varphi
    +\lim_{\timestep\to0}
    \left[\vec{c}\cdot \timestep^{-1}\Theta_{m}(B_{n}B_{n}^{\dagger}-I)\vec{\psi}\right].
\end{align*}
Thus, the limit of $\Lie_{mn}^{\timestep}\varphi(t,x)$ exists and is equal to $\mathcal{G}_{mn}\varphi(t,x)$ only
at points $(t,x)$ that satisfy $\vec{c}\cdot\Theta_{m}(B_{n}B_{n}^{\dagger}-I)\vec{\psi}(t,x)=0$. This identity holds for the data points $(t_{i},x_{i})$, but not in general unless $B_n$ is invertible. This invertibility condition also does not hold in general, although it does so almost surely for large enough $n$ under \cref{ass:psi-mu-incompatibility}. Thus, contrary to what one might expect at first, we conclude that gEDMD and EDMD may produce very different Lie derivative approximations unless the EDMD dictionaries and the data sampling strategies satisfy suitable conditions. An example where gEDMD differs from the $\tau\to 0$ limit of EDMD is offered in \cref{sec:CircularOrbit}.

\section{Approximating auxiliary function from data}
\label{sec:sdps}

In \cref{sec:EDMD}, we constructed approximate Lie derivatives for functions in the span of an $\ell$-dimensional dictionary $\vec{\phi}$. Here, we show how these approximations can be used to construct approximate auxiliary functions via polynomial optimization. Motivated by the examples in \cref{sec:AuxFunctions}, we assume the auxiliary function $\varphi \in \afSpan$ one seeks must satisfy an inequality of the form 
\begin{equation}
	\label{e:poly-ineq-initial}
	a(t,x) \varphi(t,x) + b(t,x) \Lie \varphi(t,x) + c(t,x) \geq 0
	\qquad \forall (t,x) \in S \subseteq \timeSet \times \stateSet,
\end{equation}
where the functions $a$, $b$ and $c$ and the set $S$ are given.
We focus on the case of a single inequality for simplicity, but multiple inequalities can be handled in the same way. For clarity, we also restrict to the case where $\timeSet=\R$ and $\stateSet=\R^d$ and introduce the following assumptions.
\begin{assumption}\label{ass:poly-ineq}
	There exists an nonnegative integer $\omega$ such that
	\begin{enumerate}[a{\rm)}, noitemsep, topsep=0.5ex] 
		\item The functions $a$, $b$ and $c$ are fixed polynomials of degree at most $\omega$. 
		\item There exists polynomials $s_1(t,x),\,\ldots,\,s_k(t,x)$ of degree at most $\omega$ such that $$S = \{(t,x)\in\R\times\R^d:\; s_1(t,x)\geq 0,\,\ldots,\,s_k(t,x)\geq 0\}.$$
	\end{enumerate}
\end{assumption}

\begin{assumption}\label{ass:poly-dictionaries}
	The EDMD dictionaries $\vec{\phi}$ and $\vec{\psi}$ are polynomial dictionaries.
\end{assumption}

These restrictions ensure \cref{e:poly-ineq-initial} is a polynomial inequality when $\Lie\varphi$ is an explicitly computable polynomial or is replaced by its EDMD approximation $\Lie_{mn}^\timestep \varphi$ (which is a polynomial by \cref{ass:poly-dictionaries}). One can therefore handle this inequality with well-known tools for polynomial optimization, which are based on sum-of-squares polynomials and which we briefly review below for completeness (see \cite{Laurent2009sos,lasserre2015book,parrilo2013} for an in-depth treatment of the subject). Note, however, that our discussion can be generalized to the case in which the functions appearing in \cref{e:poly-ineq-initial} and in the definition of the set $S$ are \emph{semialgebraic}, meaning that their graphs are finite unions of sets defined by finitely many polynomial inequalities. Particular examples are sinusoidal functions and rational function with a fixed positive denominator. Finally, it is immediate to extend our discussion to include polynomials $c$ that depend affinely on tunable parameters, such as the constant $U$ in \cref{sec:ergodic-optimization}.

\subsection{A brief review of sum-of-squares techniques}

Given a vector $\xi=(\xi_1,\ldots,\xi_r)$, let $\R[\xi]$ be the vector space of $r$-variate polynomials with real coefficients and the entries of $\xi$ as the independent variables. For any integer $k$, let $\R[\xi]_k$ be the subspace of polynomials of degree $k$ or less. Observe that $\R[\xi]_k$ has finite dimension $\binom{r+k}{k}$. A polynomial $f \in \R[\xi]_{2k}$ is called a sum of squares (SOS) if there exists polynomials $g_1,\ldots,g_\gamma$ such that
\begin{equation*}
	f = g_1^2 + \cdots + g_\gamma^2.
\end{equation*}
The set of all SOS polynomials will be denoted by $\Sigma[\xi]$, while $\Sigma[\xi]_{2k}$ will be the subset of SOS polynomials of degree up to $2k$.

It is clear that SOS polynomials are nonnegative. The converse is true only for univariate polynomials ($r=1$), quadratic polynomials ($2k=2$), and bivariate quartic polynomials ($r=2$ and $2k=4$) \cite{hilbert1888}. On the other hand, while checking if a polynomial is nonnegative is NP-hard in general \cite{polyNP}, checking if it is SOS is a tractable \emph{semidefinite program} (SDP), that is, a convex optimization problem over positive semidefinite matrices constrained by linear equations. Indeed, let $L=\binom{r+k}{k}$ be the dimensions of the polynomial space  $\R[\xi]_k$ and fix a basis  $\vec{q}=(q_1,\ldots,q_L)$ for it. The following statement, where $\bS^L$ is the space of $L\times L$ symmetric matrices, follows from the (reduced) Cholesky factorization for positive semidefinite matrices.

\begin{lemma}[\cite{Parrilo2003,lasserre2001global,NesterovSOS}]
	A polynomial $f \in \R[\xi]_{2k}$ is SOS if and only if there exists $Q \in \bS^L$ positive semidefinite such that $f = \vec{q}^\top Q \vec{q}$.
\end{lemma}

\noindent
The equality $f = \vec{q}^\top Q \vec{q}$ yields linear constraints on $Q$ upon expanding both sides in a common basis for $\R[\xi]_{2k}$ and matching coefficients. These constraints remain linear if $f$ depends affinely on additional tunable variables, so optimizing polynomial coefficients subject to SOS constraints is also an SDP. Moreover, the formulation of this SDP can be done automatically by software toolboxes such as {\sc yalmip}~\cite{lofberg2004yalmip,lofberg2009pre}.

Finally, SOS polynomials enable one to formulate sufficient conditions for polynomial nonnegativity on sets defined by finitely many polynomial inequalities (such sets are called \emph{basic semialgebraic}). Specifically, given a basic semialgebraic set
\begin{equation*}
	S = \left\{ \xi\in\R^r: \; s_1(\xi)\geq 0,\,\ldots,\,s_k(\xi)\geq 0 \right\},
\end{equation*}
a sufficient condition for a polyonomial $f$ to be nonnegative on $S$ is that
\begin{equation}\label{e:wsos}
	f = \sigma_0 + \sum_{i=1}^k \sigma_k s_k
\end{equation}
for some SOS polynomials $\sigma_0,\ldots,\sigma_k$. We call this a \emph{weighted} SOS representation of $f$. Note that the degree of the SOS polynomials $\sigma_0,\ldots,\sigma_k$ is not generally known a priori, since one can often arrange for terms of degree larger than that of $f$ to cancel from the right-hand side. In practice, one checks \cref{e:wsos} with SOS polynomials $\sigma_0,\ldots,\sigma_k$ of chosen even degrees $2\omega_0,\ldots,2\omega_k$, so each $\sigma_i$ can be represented by a positive semidefinite matrix of size $\binom{r+\omega_i}{\omega_i}$. Then, \cref{e:wsos} leads to an SDP with $k+1$ positive semidefinite matrices constrained by affine equalities.

\subsection{Data-driven approximation of auxiliary functions}
\label{ss:data-sos}

The SOS techniques reviewed in the previous section can be utilized to construct approximate auxiliary functions that satisfy the approximate inequality
\begin{equation}\label{e:sos:data-ineq}
	a(t,x) \varphi(t,x) + b(t,x) \Lie^{\timestep}_{mn} \varphi(t,x) + c(t,x) \geq 0 
	\qquad \forall (t,x) \in S,
\end{equation}
which is obtained by replacing the exact Lie derivative in \cref{e:poly-ineq-initial} with its data-driven approximation. Indeed, under \cref{ass:poly-ineq,ass:poly-dictionaries}, for any $\varphi= \vec{c}\cdot \vec{\phi}$ in $\afSpan$ the left-hand side of \cref{e:sos:data-ineq} is a polynomial that depends affinely on the vector $\vec{c}$ (and, possibly, any tunable parameters appearing in the polynomial $c$). Thus, given nonnegative integers $\omega_0,\ldots,\omega_k$, we can optimize $\varphi$ by solving an SDP after strenghtening \cref{e:sos:data-ineq} into the weighted SOS condition
\begin{align*}
	a \varphi + b \Lie^{\timestep}_{mn} \varphi + c = \sigma_0 + \sum_{i=1}^k \sigma_i s_i,
\end{align*}
where $\sigma_i\in \Sigma[t,x]_{2\omega_i}$ for each $i\in\{0,\ldots,k\}$ and the polynomials $s_1,\ldots,s_k$ are those defining the set $S$ (cf. \cref{ass:poly-ineq}). (More precisely, one optimizes the vector $\vec{c}\in\R^\ell$ giving the representation of $\varphi$ in the polyomial dictionary $\vec{\phi}$.)

Note that one has considerable freedom to choose the half-degrees $\omega_0,\ldots,\omega_k$. For the computational examples in \cref{sec:FirstEx}, we always use the largest values such that the degree of $\sigma_0 + \sum_{i=1}^k \sigma_i s_i$ does not exceed that of $a \varphi + b \Lie^{\timestep}_{mn} \varphi + c$.

\begin{remark}\label{rem:convergence-rates}
The above approach enables one to construct an auxiliary function $\varphi$ that satisfies the approximate inequality \cref{e:sos:data-ineq}. However, one would like $\varphi$ to satisfy the original inequality \cref{e:poly-ineq-initial}. While it is straightforward to estimate 
\begin{equation*}
	a(t,x) \varphi(t,x) + b(t,x) \Lie \varphi(t,x) + c(t,x) \geq - \left\|b  \right\|_{L^\infty(S)} \left\| \Lie \varphi - \Lie^{\timestep}_{mn} \varphi \right\|_{L^\infty(S)},
\end{equation*}
it is not immediate to obtain explicit estimates for $\|\Lie \varphi - \Lie^{\timestep}_{mn} \varphi \|_{L^\infty(S)}$ as a function of the number of data snapshots $n$, the EDMD dictionary size $m$, and the timestep $\tau$. The main challenge is that our convergence analysis from \cref{sec:Theory} only guarantees that $\Lie^{\timestep}_{mn} \varphi$ converges to $\Lie \varphi$ in $L^2_\mu(\timeSet\times \stateSet)$. This is generally not enough to provide control on $\|\Lie \varphi - \Lie^{\timestep}_{mn} \varphi \|_{L^\infty(S)}$, even if one is willing to assume that $S$ is inside the support of the data sampling measure $\mu$ (note however, that this is not the case for the examples in \cref{sec:FirstEx}). Progress could be made if one further assumes that $\Lie \varphi$ belongs to a finite dimensional space, because then $\Lie \varphi - \Lie^{\timestep}_{mn} \varphi$ belongs to a (possibly different) space of finite dimension $N=N(m)$ for which the `inverse estimate' 
\begin{equation}\label{eq:inverse-estimate}
	\|\Lie \varphi - \Lie^{\timestep}_{mn} \varphi \|_{L^\infty(\timeSet\times \stateSet)} \leq C(N) \|\Lie \varphi - \Lie^{\timestep}_{mn} \varphi \|_{L^2_\mu}
\end{equation}
is available. We leave this to a future investigation. Here, we observe only that one should not expect quantitative estimates for $\|\Lie \varphi - \Lie^{\timestep}_{mn} \varphi \|_{L^\infty(S)}$ to be monotonic in the parameters $n$, $m$ and $\tau$. For example, the constant $C(N)$ must increase with $N$, which in turn increases as $m$ is raised. Thus, the right-hand side of \cref{eq:inverse-estimate} will generally not decrease monotonically in $m$, even if one takes $n\to \infty$ first.
\end{remark}
\section{Numerical examples}\label{sec:FirstEx} 
We now illustrate the construction of approximate auxiliary functions in five examples. The first one discovers a Lyapunov function from data (see \cite{moyalan2022data} for similar examples). The next two examples tackle ergodic optimization problems for deterministic and stochastic dynamics. The fourth example estimates pointwise bounds on a chaotic attractor. The last example is analytical and explicitly describes what happens when \cref{ass:psi-mu-incompatibility} does not hold. In particular, this example illustrates the which differences can arise if EDMD is replaced by gEDMD. In all computational examples, we construct polynomial auxiliary functions using {\sc yalmip}~\cite{lofberg2004yalmip,lofberg2009pre} and {\sc mosek} \cite{mosek2015mosek}. We use {\sc chebfun} \cite{ChebFun} to implement Chebyshev polynomials. Code to reproduce our results, and to experiment with smaller datasets or larger sampling times than those reported below, is available at \url{https://github.com/DCN-FAU-AvH/eDMD-sos}.

\subsection{Lyapunov functions}
	The two-dimensional map
	\begin{equation}\label{MGLyap}
		\begin{split}
			X_{t+1} &= \tfrac{3}{10}X_t, \\
			Y_{t+1} &= -Y_t + \tfrac{1}{2}Y_t + \tfrac{7}{18}X_t^2
		\end{split}
	\end{equation}
	has a globally asymptotically stable equilibrium at the origin. We seek to prove this by finding a Lyapunov function $V(x,y)$ satisfying
	\begin{subequations}\label{e:lyap:example:conditions}
		\begin{align}
			\label{e:lyap:example:positivity}
			V(x,y) - \varepsilon (x^2 + y^2) \geq 0,\\
			-\mathcal{L}V(x,y) - \varepsilon (x^2 + y^2) \geq 0,
			\label{e:lyap:example:decay}
		\end{align}
	\end{subequations}
	for some hyperparameter $\varepsilon>0$. These conditions imply that~\cref{e:lyap:positivity}--\cref{e:lyap:coercivity} hold with strict inequality away from the origin, as required to establish asymptotic stability. Note that one can always fix $\epsilon=1$ because one can always rescale $V$ by $\varepsilon$.

	To look for $V$ using our data-driven approach, we sampled the map~\cref{MGLyap} at $n=10^4$ uniformly distributed random points in the square $[-2,2]\times[-2,2]$. We then implemented the two inequalities in \cref{e:lyap:example:conditions} with $\varepsilon=1$ and with $\Lie V$ replaced by its data-driven approximation $\Lie_{mn}^\timestep V$ from \cref{sec:EDMD}. We used the weighted SOS approach of \cref{ss:data-sos} to search for polynomial $V$ of degree $4$, so $\vec{\phi}$ lists the $\ell=15$ monomials in $(x,y)$ of degree up to $4$, and we took all $m=45$ monomials of degree up to $8$ as the EDMD dictionary $\vec{\psi}$.  This choice ensures that $\Lie V \in \dictionarySpan$, but similar results are obtained when $\vec{\psi}$ includes also monomials of higher degree.

	Minimizing the $\ell^1$ norm of the coefficients of $V$ returns
	\begin{equation*}
		V(x,y) = 3.0815 x^2-1.5686 xy + 1.3333 y^2 -1.3038 x^3 + 0.5428 x^2y +0.2226 x^4,
	\end{equation*}
	where numerical coefficients have been rounded to four decimal places.
	Of course, this is only an approximate Lyapunov function: its positivity is guaranteed, as we have imposed \cref{e:lyap:example:positivity} exactly, but we do not know if its exact Lie derivative,
	\begin{equation*}
		\mathcal{L}V(x,y) = V\!\left(\tfrac{3}{10}x,-x + \tfrac{1}{2}y + \tfrac{7}{18}x^2\right) - V(x,y),
	\end{equation*}
	really satisfies \cref{e:lyap:example:decay} for some $\varepsilon>0$. This can be verified by maximizing $\varepsilon$ subject to~\cref{e:lyap:example:decay} for the given $V$. Doing so returns $\varepsilon \approx 0.9999$, so we have indeed constructed a Lyapunov function for the system.

	\begin{remark}
		The particular quartic $V$ constructed in this example has the special property that $\Lie V$ is also quartic. This means our data-driven approach gives the same answer when $\vec{\psi}$ lists only the monomials of degree up to 4, i.e., in the special case where $\afSpan = \dictionarySpan$. This is not true in general: in the following examples, the strict inclusion $\afSpan \subset \dictionarySpan$ is necessary to obtain accurate auxiliary functions.
	\end{remark}

	\begin{remark}\label{rmk:Lyapunov}
		Our data-driven discovery of Lyapunov functions is similar, but not equivalent, to the Koopman operator methods from \cite{deka2022koopman,zheng2022data}, which are a data-driven version of \cite{mauroy2016global,vaidya2008lyapunov}. Both approaches construct SOS Lyapunov functions in the form 
		\begin{equation*}
			V(x) = \frac{1}{2}\sum_{j = 1}^J \alpha_j|\vec{w}_j\cdot \vec{v}(x)|^2, \qquad \alpha_j \geq 0, 
		\end{equation*}
		where $\vec{v}$ is a basis of polynomials of degree $d\geq1$. In our approach we find such a $V(x)$ by tuning the vector $\vec{c}$ such that $V(x) = \vec{c}\cdot\vec{\phi}(x)$ admits an SOS decomposition and the approximate Lie derivative of $V$ is the negative of an SOS polynomial. Alternatively, \cite{deka2022koopman,zheng2022data} approximates the exact Lie derivative  
		\begin{equation*}
			\Lie V(x) = \sum_{j = 1}^J \alpha_j[(\Lie \vec{v}(x))^T \vec{w}_j\vec{w}^*_j\vec{v}(x) + \vec{v}(x) \vec{w}_j\vec{w}^*_j(\Lie\vec{v}(x))]   
		\end{equation*}
		by replacing $\Lie\vec{v}(x)$ with the vector $A\vec{v}$, where $A$ is a {\em square matrix} determined via EDMD. Having the $\vec{w}_j$ be left eigenvectors of the EDMD matrix associated to eigenvalues with negative real parts $\lambda_j$ leads to 
		\begin{equation*}
			\Lie V(x) \approx \sum_{j = 1}^J \mathrm{Re}(\lambda_j)\alpha_j|\vec{w}_j\cdot \vec{v}(x)|^2,   
		\end{equation*} 
		which is the negative of an SOS polynomial. While this has the advantage of producing an approximate Lyapunov function simply through an eigenvalue computation rather than the solution of an SOS problem, using Koopman eigenfunctions for other applications still requires the solution to SOS programs. In such cases, our approach offers the added flexibility of using two different EDMD polynomial dictionaries, which has potential for improving the accuracy of approximate Lie derivatives. We leave confirming this to future work.
	\end{remark}

\subsection{Ergodic optimization for the van der Pol oscillator}
\label{sec:vdp}
    Let us consider the van der Pol oscillator, given by the second-order ODE
    \begin{equation}\label{VdP}
        \ddot{X_t} - 0.1(1 - X_t^2)\dot{X}_t + X_t = 0.
    \end{equation}
    The state-space is $\stateSet = \mathbb{R}^2$, which corresponds to all possible values for $X_t$ and $\dot{X}_t$. We seek upper bounds on the long-time average of the `energy' of the system, here given by the observable
    \begin{equation}\label{VdPenergy}
        g(X_t,\dot{X}_t) = X_t^2 + \dot{X}_t^2.
    \end{equation}	 
    Equation \cref{VdP} has a stable limit cycle that attracts every initial condition except that at the unstable fixed point $(X_t,\dot{X}_t) = (0,0)$. This point saturates the trivial lower bound $g(X_t,\dot{X}_t)\geq 0$, while the long-time average of $g$ is maximized by the limit cycle.  

    \begin{table}[t] 
    \centering
    \caption{Data-driven upper bounds for the energy of the van der Pol oscillator \cref{VdP}, obtained with polynomial auxiliary functions of degree $\alpha$ and different integration times $T$ for the data collection. The final row gives bounds computed using the exact Lie derivative, \cref{VdPLie}, while the final column reports the average of the energy over the dataset collected for each integration time $T$.}
	\begin{tabular}{lccccccc}
		\toprule
		$T$ && $\alpha=4$ & $\alpha=6$ & $\alpha=8$ & $\alpha=10$ && Empirical Average\\
		\midrule
		$10^2$ && 6.1716 & 4.0100 & 4.0013 & 4.0011 && 2.2322  \\
		$10^{5/2}$ && 5.6799 & 4.0100 & 4.0013 & 4.0013 && 3.4418 \\
		$10^3$ && 5.3644 & 4.0100 & 4.0013 & 4.0010 && 3.8244   \\
		\midrule
		Exact && 6.6751 & 4.0100 & 4.0013 & 4.0012 && ---  \\
		\bottomrule
		\end{tabular}
    \label{tbl:VdP}
    \end{table}

    The goal of this example is to demonstrate that nearly sharp upper bounds can be established with less data than is required to observe convergence of a simple empirical average of the same data. For illustration, we generate synthetic data through numerical integration of the system \cref{VdP} with a timestep $\timestep = 0.001$, starting from the initial condition $(X_0,\dot{X}_0) = (0.1,0.2)$. Notice that the initial condition is chosen close to the unstable fixed point, meaning that there is an initial transient before falling into the stable limit cycle. This initial transient means that the long-time average of $g$ will take time to converge to its value along the limit cycle.

    \Cref{tbl:VdP} presents approximate upper bounds obtained by optimizing approximate auxiliary functions $V \in \afSpan$ with $\vec{\phi}$ listing all monomials in $(x,\dot x)$ up to degree $\alpha \geq 1$. The EDMD dictionary $\vec{\psi}$ was chosen to lists all monomials up to degree $\beta = \alpha + 2$. For each row of the table, data were collected by simulating \cref{VdP} up to the time horizon $T$ stated in the first column. The empirical average, obtained by averaging the energy observable \cref{VdPenergy} up to the given time horizon $T$, is presented in the final column. In the final row we provide the computed upper bound using the exact Lie derivative, here acting on differentiable functions $\varphi:\mathbb{R}^2 \to \mathbb{R}$ by
    \begin{equation}\label{VdPLie}
        \Lie \varphi(x,y) 
		= 
		\partial_x\varphi(x,y) y + \partial_y\varphi(x,y) [0.1(1 - x^2) y - x].
    \end{equation}  

	From both integrating \cref{VdP} far into the future on the limit cycle and the final row of \cref{tbl:VdP}, we find that the long-time average of the energy over the limit cycle is (to four significant digits) $4.001$. Notice that for all values of $T$ presented in the table the empirical average has not converged to this value, meaning that the initial transients are still influencing it. In contrast, if we approximate the Lie derivative with the same data and apply our data-driven bounding procedure, we are able to extract accurate approximate bounds even with the smallest dataset ($T = 10^2$). Thus, our data-driven approach enables us to extract system statistics from data long before they can be observed in the data itself.  
    
    \begin{figure}
        \centering
        \includegraphics[width=0.99\textwidth]{./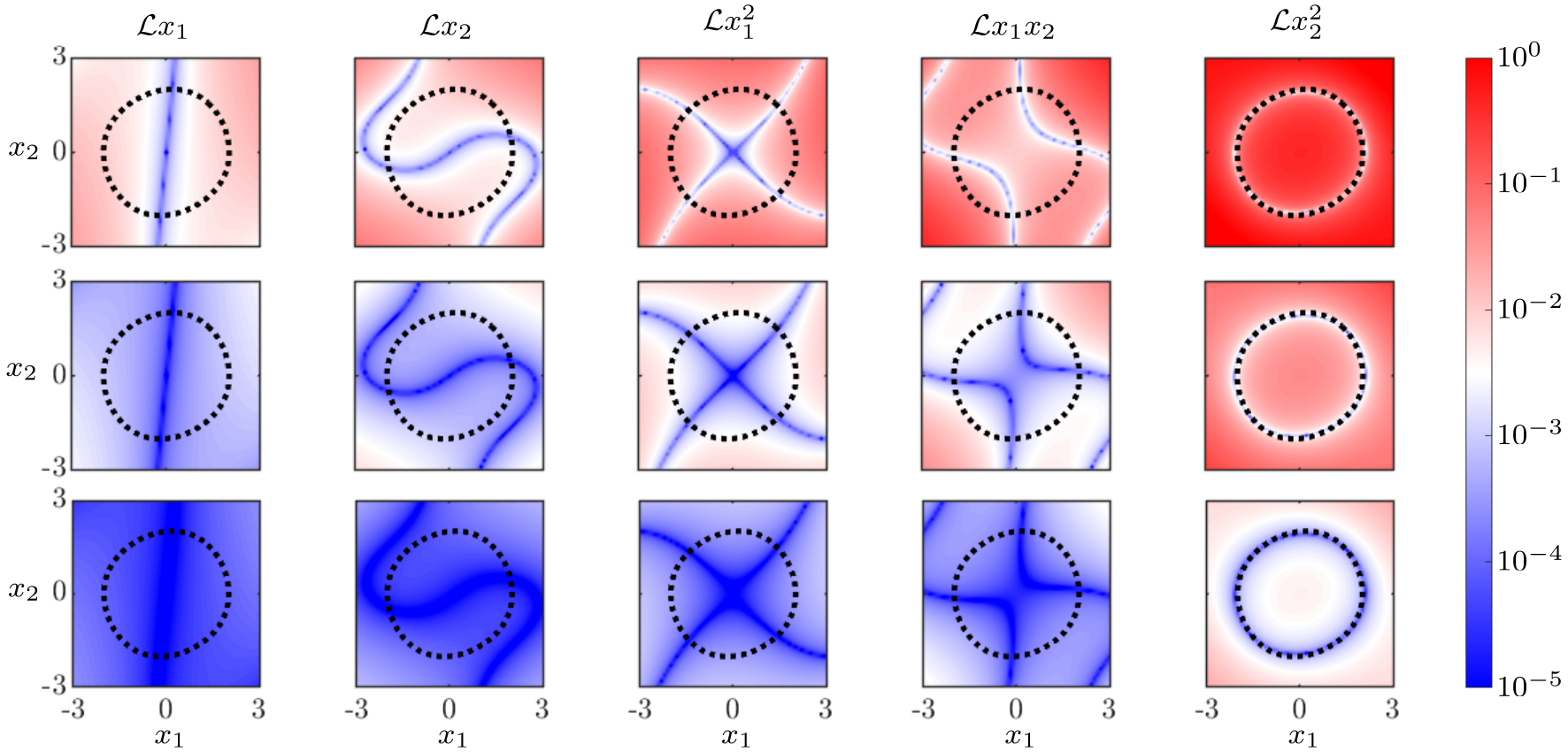}
        \caption{Error between $\Lie\varphi$ and its approximation $\Lie_{mn}^\timestep\varphi$ for $\varphi \in \{x_1,x_2,x_1^2,x_1x_2,x_2^2\}$ and timesteps $\timestep=10^{-2}$ (top row), $10^{-3}$ (middle row) and $10^{-4}$ (bottom row). In every case, $\vec{\psi}$ lists monomials of degree 4 and we sampled $n=10^6$ datapoints from the limit cycle (black dotted line).}
		\label{fig:vdp}
    \end{figure}

	The accuracy of the bounds in \cref{tbl:VdP} is due to an accurate approximation of the Lie derivative from data. Interestingly, since in this example the system dynamics are governed by a polynomial equation, we do not even require the transients to obtain such an accurate approximation. \Cref{fig:vdp} demonstrates that using only data sampled on the limit cycle we observe the {\em global} pointwise convergence of $\Lie_{mn}^\timestep \varphi$ to $\Lie \varphi$ on $\mathbb{R}^2$. Such convergence is a consequence of \cref{thm:conv-m-infty-poitwise}, whose second condition is satisfied because the limit cycle of the van der Pol oscillator is not an algebraic curve~\cite{Odani1995} and so cannot be contained in the zero level set of any element of an exclusively polynomial dictionary. The result is the ability to approximate the Lie derivative globally, rather than only in the region of state space where the data has been sampled from. Note, however, that this ability relies heavily on the dynamics being governed by polynomial equations and should not be expected in general.

\subsection{Ergodic optimization for a stochastic logistic map}
\label{ss:stoch-logistic-results}
The stochastic logistic map is given by
\begin{equation}\label{RandomLogistic}
	X_{t+1} = \lambda_t X_t(1 - X_t), \qquad t \in \mathbb{N},
\end{equation}
where $\lambda_t$ is drawn from the uniform distribution on $[0,4]$ for each $t \in \mathbb{N}$. The state-space is $\stateSet = \R$ and the unit interval $S=[0,1]$ is positively invariant. We seek to place upper and lower bounds on the long-time expected value of the observable ${g}(x) = x$. The auxiliary function framework for ergodic optimization in~\cref{sec:ergodic-optimization} applies to stochastic dynamics if one uses the stochastic definition of the Lie derivative. In our example, any auxiliary function $\varphi:\R \to \R$ has the stochastic Lie derivative
\begin{equation}\label{LogisticLie}
	\mathcal{L}\varphi = \frac{1}{4}\int_0^4 \varphi(\lambda x(1 - x))\mathrm{d}\lambda - \varphi(x).
\end{equation}

\begin{figure}[t]
	\centering
	\includegraphics[width=0.47\textwidth]{./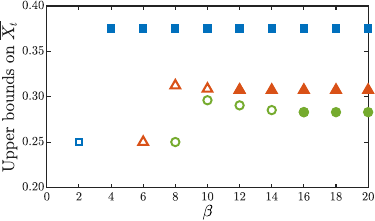}
	\hspace{10pt}
	\includegraphics[width=0.47\textwidth]{./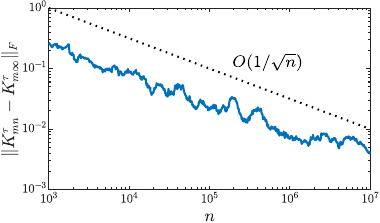}
	\caption{Left: Upper bounds on $\overline{X_t}$ for the the stochastic logistic map \cref{RandomLogistic}, obtained with EDMD dictionaries $\vec{\psi}$ of increasing size $m=\beta+1$ and polynomial auxiliary functions of degree $\alpha=2$ (squares), $6$ (triangles) and $8$ (circles). Symbols are full if $\beta\geq2\alpha$. Bounds are constant for $\beta\geq 2\alpha$.
	Right: Decay of the Frobenius norm $\|K_{mn}^\timestep - K_{m\infty}^\timestep\|_F$ with $n$, the number of data snapshots. Results are for $(\alpha,\beta) = (4,8)$, so $m = 9$, but are representative of other $(\alpha,\beta)$ combinations. 
	}
	\label{f:stoch-avg-results}
\end{figure}

We use our data-driven approach to construct approximate polynomial auxiliary functions of increasing degree $\alpha$. For numerical stability we represent polynomials using the Chebyshev basis $\vec{\phi} = (T_0(x),\ldots,T_\alpha(x))$ and we take $\vec{\psi}=(T_0(x),\ldots,T_{2\alpha}(x))$ as our EDMD dictionary. This choice ensures $\Lie\varphi \in \dictionarySpan$ for every $\varphi \in \afSpan$ but, as demonstrated by the left panel of \cref{f:stoch-avg-results}, the results do not change if one uses $\vec{\psi}=(T_0(x),\ldots,T_{\beta}(x))$ with $\beta\geq 2\alpha$.
We finally write $S = \{x\in\R:\, x-x^2\geq 0\}$, so \cref{ass:poly-ineq,ass:poly-dictionaries} are met.

Our dataset consists of one trajectory of the map with initial condition $x_0 \in (0,1)$ and $n = 10^7$ iterates, but we also implemented our approach using only the first $10^4$, $10^5$, and $10^6$ datapoints to investigate how results vary with $n$. Such a large amount of data is required to obtain accurate approximations of the Lie derivative for our stochastic map. Indeed, as shown in the right panel of \cref{f:stoch-avg-results}, the EDMD matrix $K_{mn}^\timestep$ converges at an $O(1/\smash{\sqrt{n}})$ rate to its infinite-data limit $K_{m\infty}^\timestep$, which can be calculated explicitly for this example using \cref{LogisticLie}.

\begin{table}[t] 
	\centering
	\caption{Data-driven `bounds' on $\overline{X_t}$ for the random logistic model~\cref{RandomLogistic}. Computations used degree-$\alpha$ polynomial auxiliary functions, an EDMD dictionary $\vec{\psi}$ listing monomials of degree up to $2\alpha$, and $n=10^4$--$10^7$ data snapshots. Exact bound values were computed using the exact Lie derivative~\cref{LogisticLie}.}
	\begin{tabular}{ r c ccccccc}
		\toprule
		&& $\alpha=2$ & $\alpha=4$ & $\alpha=6$ & $\alpha=8$ & $\alpha=10$ & $\alpha=12$ & $\alpha=14$  \\ [0.5ex]
		\midrule
		\parbox[t]{2ex}{\multirow{5}{*}{\rotatebox[origin=c]{90}{Upper Bound}}} &
		$n=10^4$
		& 0.3765 & 0.3162 & 0.3186 & 0.2844 & 0.2851 & 0.2858 & 0.2856 \\
		& $n=10^5$
		& 0.3751 & 0.3126 & 0.3086 & 0.2835 & 0.2814 & 0.2775 & 0.2757 \\
		& $n=10^6$
		& 0.3749 & 0.3124 & 0.3072 & 0.2832 & 0.2821 & 0.2758 & 0.2730 \\
		& $n=10^7$
		& 0.3751 & 0.3126 & 0.3070 & 0.2830 & 0.2817 & 0.2766 & 0.2737 \\
		\cmidrule{2-9}\\[-2ex]
		& Exact & 0.3750 & 0.3125 & 0.3069 & 0.2829 & 0.2816 & 0.2765 & 0.2736  \\
		\midrule
		\parbox[t]{2ex}{\multirow{5}{*}{\rotatebox[origin=c]{90}{Lower Bound}}} &
		$n=10^4$ & 0.0032 & 0.0055 & 0.0142 & 0.0107 & 0.0098 & 0.0090 & 0.0088 \\
		& $n=10^5$  & 0.0004 & 0.0016 & 0.0070 & 0.0057 & 0.0030 & 0.0248 & 0.0023 \\
		& $n=10^6$  & 0.0004 & 0.0010 & 0.0027 & 0.0059 & 0.0017 & 0.0032 & 0.0024 \\
		& $n=10^7$  & 0.0001 & 0.0001 & 0.0003 & 0.0011 & 0.0010 & 0.0016 & 0.0019 \\
		\cmidrule{2-9}\\[-2ex]
		& Exact  & 0.0000 & 0.0000 & 0.0000 & 0.0000 & 0.0000 & 0.0000 & 0.0000 \\
		\bottomrule
	\end{tabular}
	\label{tbl:Logistic}
\end{table}

The approximate upper and lower bounds on $\overline{X_t}$ we obtained are listed in \cref{tbl:Logistic} alongside exact bounds obtained with the exact Lie derivative \cref{LogisticLie}. This can be computed explicitly for polynomial $\varphi$ since the integral over $\lambda$ in \cref{LogisticLie} can easily be evaluated analytically. The data-driven `bounds' appear to converge to the exact ones in a non-monotonic fashion as $n$ increases, and the two agree to at least two decimal places for $n=10^7$. This confirms our approach works well with sufficient data.
The zero lower bound is sharp for \cref{RandomLogistic}, as it is saturated by the equilibrium trajectory $X_t=0$. Crucially, this trajectory is not part of our dataset, meaning that we learnt information about \emph{all} possible stationary distributions of the system even though we sampled data only from the \emph{single} stationary distribution approximated by the empirical distribution of the iterates $X_t$ in our simulated trajectory.
The upper bound, instead, decreases as $\alpha$ is raised and we conjecture it approaches the value $1/4$ of the stationary expectation of $X_t$, which we estimated by taking the average of our simulated trajectory. 
We also conjecture that the convergence with increasing $\alpha$ is slow because, for degree-$\alpha$ polynomial $\varphi$, the expression for $\Lie\varphi$ depends only on the first $\ell = \alpha+1$ moments of $\lambda$, which do not uniquely characterize its distribution. Thus, bounds obtained with fixed $\alpha$ apply to the \emph{maximum} stationary average of $X_t$, where the maximum is taken over all possible distributions of $\lambda$ whose moments of degree up to $\alpha$ coincide with those of the uniform distribution on $[0,4]$.

\subsection{Attractor bounds for a non-polynomial system}

As our next demonstration, we seek data-driven estimates on the maximum value of the state variables $x_1$, $x_2$ and $x_3$ on the chaotic attractor of the system 
\begin{equation}\label{Thomas}
\begin{split}
	\dot x_1 &= \sin(x_2) - 0.2x_1, \\
	\dot x_2 &= \sin(x_3) - 0.2x_2, \\
	\dot x_3 &= \sin(x_1) - 0.2x_3,
\end{split}
\end{equation}
introduced by Thomas \cite{thomas1999deterministic}. We generate synthetic data by integrating \cref{Thomas} with the initial condition $(x_1,x_2,x_3) = (0.1,0.2,0.3)$ and timestep $\tau = 0.001$. One can prove that the ball of radius 5 absorbs all initial conditions and, for numerical stability, we scale our simulation data via the linear transformation $\hat x = x/5$ to ensure the chaotic attractor is contained within the unit ball. 
We then use the pointwise bounding method of Section~\ref{sec:attractor-bounds} to obtain data-driven bounds on the functions $g_1(x) = x_1$, $g_2(x) = x_2$, and $g_3(x) = x_3$. Data-driven approximations of the constraints in \cref{AttractorBound} are implemented using the scaled variable $(\hat{x}_1, \hat{x}_2\hat{x}_3)$ and with $S = \{(\hat x_1,\hat x_2,\hat x_3):\ 1 - \hat x_1^2 - \hat x_2^2 - \hat x_3^2 \geq 0\}$. All results reported below, however, are for the original variables $(x_1,x_2,x_3)$.  We also fix $\lambda = 1$ in our computations, but direct the reader to \cite{goluskin2020bounding} for a discussion of the potentially complex dependence of the bounds on this parameter.   

\Cref{tbl:Thomas} presents the bounds we obtained when $\vec{\phi}$ is the set of all monomials in $(x_1,x_2,x_3)$ up to degree 4  and $\vec{\psi}$ is a monomial basis of increasing degree. Data was generated by integrating \cref{Thomas} up to $T = 10^2$, $10^3$, and $10^4$. Observe that although the cyclic symmetry of \cref{Thomas} implies that all pointwise bounds should be the same, this symmetry is not reflected in the training data. Thus, one should not expect the results of our data-driven implementation to be the same for all coordinates except in the infinite-data limit. This lack of symmetry is apparent in \cref{tbl:Thomas}, which however also demonstrates the improvement with increasing data: for integration time $T = 10^4$, bounds on the three coordinates $x_1$, $x_2$ and $x_3$ differ only on the order of the timestep.

\begin{table}[t] 
	\centering
	\caption{Data-driven estimates for the state-space variables $g_j(x) = x_j$, $j = 1,2,3$, on the attractor of the Thomas system \cref{Thomas}. Results are computed using the method of \cref{sec:attractor-bounds} with $\lambda=1$ and $V \in \linspan\vec{\phi}$, where $\vec{\phi}$ lists all monomials in $x$ of degree up to 4. Our data-driven implementation used $\vec{\psi}$ to list monomials in $x$ of increasing degree $\mathrm{deg}(\vec{\psi})$ and different integration times $T$.}
	\begin{tabular}{ r c ccccc}
		\toprule
		& $\mathrm{deg}(\vec{\psi})$ & $4$ & $5$ & $6$ & $7$ & $8$  \\ [0.5ex]
		\midrule
		\parbox[t]{2ex}{\multirow{2}{*}{\rotatebox[origin=c]{90}{$T = 10^2$}}} &
		$g_1(x) = x_1$
		& 3.8872 & 4.0886 & 3.9583 & 4.2445 & 4.2921 \\
		& $g_2(x) = x_2$
		& 3.4003 & 3.5564 & 3.7701 & 3.9552 & 3.9845 \\
		& $g_3(x) = x_3$
		& 3.9014 & 4.0591 & 4.0572 & 4.3643 & 4.2156 \\
		\midrule
		\parbox[t]{2ex}{\multirow{2}{*}{\rotatebox[origin=c]{90}{$T = 10^3$}}} &
		$g_1(x) = x_1$
		& 3.8445 & 3.9907 & 4.2413 & 4.4640 & 4.3309  \\
		& $g_2(x) = x_2$
		& 3.8146 & 3.9678 & 4.2123 & 4.4882 & 4.3887  \\
		& $g_3(x) = x_3$
		& 3.7918 & 3.9558 & 4.2395 & 4.4147 & 4.3467  \\
		\midrule
		\parbox[t]{2ex}{\multirow{2}{*}{\rotatebox[origin=c]{90}{$T = 10^4$}}} &
		$g_1(x) = x_1$
		& 3.8131 & 3.9650 & 4.2555 & 4.5115 & 4.4241  \\
		& $g_2(x) = x_2$
		& 3.8071 & 3.9658 & 4.2678 & 4.5266 & 4.4179  \\
		& $g_3(x) = x_3$
		& 3.7979 & 3.9602 & 4.2337 & 4.5021 & 4.4486   \\
		\bottomrule
	\end{tabular}
	\label{tbl:Thomas}
\end{table}

We must also stress that the numbers listed in \cref{tbl:Thomas} are not rigorous bounds on the maximal values that $x_1,x_2,x_3$ can attain on the attractor. Rather, they should be viewed as data-driven estimates. Indeed, the results for  $\mathrm{deg}(\vec{\phi}) = \mathrm{deg}(\vec{\psi}) = 4$ are not valid upper bounds, since they are smaller than the maximum value of $3.9564$ observed in a numerical simulation of \cref{Thomas} up to $T= 10^6$. While this may be undesirable, it is also to be expected as data-driven system analysis methods are inherently approximate. On the other hand, observe that the results improve dramatically if one moves beyond the typical setting of EDMD, where one uses $\vec{\phi} = \vec{\psi}$ to identify a square approximate Koopman matrix~\cite{eDMD}. This is because having $\afSpan\subset\dictionarySpan$, which is achieved using polynomial bases $\vec{\phi}$ and $\vec{\psi}$ of different degree, allows one to account for the nonlinearity of the underlying dynamics. In particular, one expects good results if the data is generated by an ODE whose vector field is well approximated by a polynomial of degree $\mathrm{deg}(\vec{\psi}) - \mathrm{deg}(\vec{\phi})$.  

Finally, observe that the approximate bounds in \cref{tbl:Thomas} are not monotonic as one increases either the integration time $T$ (hence, the amount of data) or the degree of the monomial basis $\vec{\psi}$.
The non-monotonicity in $T$ is due to the fact that our results are produced with a finite amount of data collected along one particular system trajectory, which as the integration time $T$ grows may spend different fractions of time on different parts of the attractor. For this reason, while \cref{thm:inf-data-limit} guarantees that data-driven approximate bounds for fixed bases $\vec{\phi}$ and $\vec{\psi}$ will converge to some (a priori unknown) value as $T \to \infty$, there is no reason to expect this convergence to be monotonic. Similarly, increasing the degree of $\vec{\psi}$ for fixed amount of data guarantees only a more accurate representation of the Lie derivatives of the basis function in $\vec{\phi}$ at the points in the training dataset, but does not rule out progressively worse approximations elsewhere. There are therefore no reasons to expect convergence at all when the training data is finite, let alone monotonic convergence. Moreover, for this example one cannot expect monotonic convergence even if one could pass to the infinite-data limit. Indeed, since the underlying dynamics are not polynomial, one cannot use Theorem~\ref{thm:conv-m-infty-poitwise} to ensure that Lie derivatives can be recovered from data arbitrarily accurately everywhere in the state space. In particular, while approximations at points on the attractor of \cref{Thomas} do improve as $\deg(\vec{\psi})$ is raised, approximations elsewhere could worsen. This is in stark contrast with the van der Pol example of \cref{sec:vdp}, where convergence on the attractor implies convergence on the whole state space.

\subsection{Ergodic optimization with a circular attractor}\label{sec:CircularOrbit}

We conclude with an example illustrating that if \cref{ass:psi-mu-incompatibility} does not hold, then Lie derivative approximations based on EDMD (\cref{sec:EDMD}) and gEDMD (\cref{ss:gEDMD}) can behave very differently from what one might expect. This behaviour is however consistent with the results proved in \cref{sec:Theory,ss:gEDMD}. In practice, therefore, one must be careful not to misinterpret results obtained with approximate auxiliary functions.

\paragraph{The problem} 
Consider the two-dimensional ODE
\begin{equation}\label{e:circular-system}
\begin{aligned}
	\dot{X_1} &= -X_2 + X_1(1-X_1^2-X_2^2)\\
	\dot{X_2} &= \phantom{-}X_1 + X_2(1-X_1^2-X_2^2),
\end{aligned}
\end{equation}
which has an unstable equilibrium point at $(x_1,x_2)=(0,0)$ and an attracting circular limit cycle $X_t=(\cos t, \sin t)$. We will use the auxiliary function framework of \cref{sec:ergodic-optimization} to find a lower bound $L$ on the time average of the quantity $g(x_1,x_2)=x_1^2 + x_2^2$. To make things concrete, we will look for a quadratic auxiliary function of the form 
\begin{equation}\label{e:circle:AF}
	V(x_1,x_2) = \gamma \left(1 + x_1^2 + x_2^2 \right),
\end{equation}
where $\gamma\in\R$ should be chosen such that the inequality
\begin{equation}\label{e:circle:bound-condition}
    x_1^2 + x_2^2 + \Lie V(x_1,x_2) - L \geq 0
\end{equation}
holds for all $x_1,x_2$ and the largest possible $L$. The exact Lie derivative is
\begin{equation}\label{e:circle:exact-Lie}
	\Lie V(x) = 2\gamma \left(x_1^2 + x_2^2\right) \left(1 - x_1^2 - x_2^2 \right)
\end{equation}
so with $\gamma=0$ we obtain the lower bound $L=0$. This lower bound is sharp, as it is saturated by the unstable equilibrium at the origin.

\paragraph{Data-driven lower bound via EDMD}
We now seek data-driven lower bounds when $\Lie V$ in \cref{e:circle:bound-condition} is replaced by its EDMD-based approximation $\Lie_{mn}^\timestep V$. We use $n$ data snapshots $(t_i, x_i, y_i)$ sampled at a rate $\timestep$ from the limit cycle, so
$t_i = i\timestep$, 
$x_i = (\cos t_i, \sin t_i)$ and $y_i = (\cos(t_i+\timestep), \sin(t_i+\timestep))$.
The function $V$ in \cref{e:circle:AF} belongs to the span of $\vec{\phi}=(1,x_1^2,x_2^2)$ and we use the particular EDMD dictionary $\vec{\psi}=(1,x_1^2,x_1x_2,x_2^2)$. Similar results are obtained with any dictionary $\vec{\psi} = (1,x_1^2,x_1x_2,x_2^2,\psi_5,\ldots,\psi_m)$ where $\psi_5,\ldots,\psi_m$ are monomials.

With these choices, the approximate Lie derivative $\Lie_{mn}^\timestep V$ can be calculated analytically using trigonometric identities for every $n$ and $\timestep$ to find
\begin{equation*}
	\Lie_{mn}^\timestep V(x_1,x_2) = \frac{\gamma}{3\timestep}\left( 1 - x_1^2 - x_2^2\right).
\end{equation*}
Thus, the approximate version of \cref{e:circle:bound-condition} with $\Lie V$ replaced by $\Lie_{mn}^\timestep V$ requires
\begin{equation*}
\frac{\gamma}{3\timestep}-L + \left(1 - \frac{\gamma}{3\timestep}\right)\left(x_1^2 + x_2^2\right) \geq 0\qquad \forall x_1,x_2.
\end{equation*}
Setting $\gamma = 3\timestep$ we find the lower bound $L=1$, which is evidently incorrect as it is violated by the equilibrium point at the origin.

This apparent contradiction can be explained by recalling from \cref{sec:ergodic-optimization} that a lower bound proved using the inequality $x_1^2 + x_2^2 + \Lie_{mn}^\timestep V(x_1,x_2)\geq L$ applies only to trajectories for which $\Lie_{mn}^\timestep V = \Lie V$, which is true only on the circles with radii $1$ and $1/(6\timestep)$. The system's limit cycle is the only trajectory remaining inside this set at all times, so the lower bound $L=1$ applies only to it (and is in fact sharp).

\paragraph{Data-driven lower bound via gEDMD}
We now repeat the exercise, but this time use the approximate Lie derivative $\mathcal{G}_{mn}V$ obtained with gEDMD as described in \cref{ss:gEDMD}. For this, we use data snapshots $\{(t_i, x_i, y_i)\}_{i=1}^n$ where $t_i=i\timestep$ and $x_i = (\cos t_i, \sin t_i)$ as before, but
\begin{equation*}
    y_i 
    = \Lie\vec{\phi}(x_i)
    = \begin{pmatrix}
    0 \\ -2 \cos t_i \sin t_i  \\ \phantom{-}2 \cos t_i \sin t_i
    \end{pmatrix}.
\end{equation*}

For our auxiliary function $V=\gamma(1+x_1^2+x_2^2)$ and dictionary $\vec{\psi}=(1,x_1^2,x_1x_2,x_2^2)$, one has $\mathcal{G}_{mn}V\equiv 0$ independently of $n$. The best lower bound provable with the inequality $x_1^2 + x_2^2 + \mathcal{G}_{mn} V(x_1,x_2)\geq L$ is therefore $L=0$, which is correct and sharp for all trajectories of \cref{e:circular-system}. Strictly speaking, however, this bound applies only to trajectories for which $\mathcal{G}_{mn}V = \Lie V$; it just so happens that these are exactly the unstable equilibrium and the limit cycle, which are the only invariant trajectories of the system.

\paragraph{Discussion}
In the examples above, the EDMD- and gEDMD-based Lie derivatives behave very differently when used to construct auxiliary functions. In particular, it is evident that $\Lie_{mn}^\timestep V\neq \mathcal{G}_{mn}V$, and none of these two functions recovers the exact Lie derivative \cref{e:circle:exact-Lie} on the full space. The same is true in the infinite-data limit ($n\to\infty$) because $\Lie_{mn}^\timestep V = \Lie_m^\timestep V$ and $\mathcal{G}_{mn}V =\mathcal{G}_m V$, as the left-hand sides are independent of $n$. Moreover, the function $\Lie_{m}^\timestep V$ converges to $\mathcal{G}_{m}V$ as $\timestep\to0$ only at points $(x_1,x_2)$ satisfying $x_1^2+x_2^2=1$.
This is exactly what \cref{thm:h-to-zero} predicts, since in our example we have 
\begin{equation*}
    \vec{c} = \begin{bmatrix}
        \gamma \\ \gamma \\ \gamma
    \end{bmatrix},
    \quad
    \vec{\psi} = \begin{bmatrix}
        1 \\ x_1^2 \\ x_1x_2 \\ x_2^2
    \end{bmatrix},
    \quad
    \Theta_m = \begin{bmatrix}
        1 & 0 & 0 & 0\\
        0 & 1 & 0 & 0\\
        0 & 0 & 0 & 1
    \end{bmatrix},
    \quad
    B = \frac{\pi}{4}\begin{bmatrix}
        8 & 2 & 0 & 2\\
        4 & 3 & 0 & 1\\
        0 & 0 & 1 & 0\\
        4 & 1 & 0 & 3
    \end{bmatrix},
\end{equation*}
%
giving $\vec{c}\cdot \Theta_m(BB^\dagger - I)\vec{\psi} = \gamma(1 - x_1^2 - x_2^2)$. Here, the matrix $B=\int \vec{\psi}\vec{\psi}^\top d\mu$ was computed by taking $\mu$ to be the uniform measure on the unit circle, which is the right choice for our data sampling strategy. 

Finally, we stress that the results in this example are very different to those obtained for the van der Pol oscillator in \cref{sec:vdp}, where $\Lie_{mn}^\timestep V$ converged to $\Lie V$ pointwise on $\R^2$ (cf. \cref{fig:vdp}). This could be anticipated because the limit cycle of \cref{e:circular-system} is an algebraic curve, meaning that it is the zero level set of a polynomial. Polynomial dictionaries $\vec{\psi}$ whose span includes polynomials in the form $p(x_1,x_2)(1-x_1^2-x_2^2)$ cannot therefore satisfy \cref{ass:psi-mu-incompatibility}. In contrast, the limit cycle of the van der Pol oscillator is not an algebraic curve \cite{Odani1995}, so \emph{any} polynomial dictionary $\vec{\psi}$ satisfies \cref{ass:psi-mu-incompatibility}. Therefore, rather remarkably, one is able to recover information about the global system dynamics even when sampling only on the limit cycle.
\section{Conclusion}\label{sec:Conclusion}

In this work we have provided a data-driven method for deducing information about dynamical systems without first discovering an explicit model. Our method combines two areas that are by now well-developed, namely, system analysis via auxiliary functions (sometimes also called Lyapunov or Lyapunov-like functions) and the data-driven approximation of the Koopman operator via EDMD. We also extended some known convergence results for EDMD to a broad class of stochastic systems, often under weaker assumptions than usual (cf. \cref{sec:Theory}). The result is a flexible and powerful method that can be applied equally easily to data generated by deterministic and stochastic dynamics, without any special pre-processing or other modifications to handle the stochasticity. Our examples have shown that we can accurately obtain Lyapunov functions from data, provide sharp upper bounds on long-time averages using less data than is required for an empirical average to converge, and bound expectations of stochastic processes. We expect a similar success when using auxiliary functions to study other properties of nonlinear systems. We also expect similar success when the data is polluted by small measurement noise, since the effect of the noise can be mitigated using filtering techniques \cite{nonomura2018,falconer2023} or integral formulations of model identification techniques that have straightforward extensions to EDMD \cite{WeakSINDy1,WeakSINDy2,McCalla}.

One potentially promising application of our method is as a pre-conditioner to discovering accurate and parsimonious dynamical models. For example, knowledge of Lyapunov functions, basins of attraction, or absorbing sets can improve data-driven model discovery from noisy or incomplete datasets \cite{ahmadi2020learning}. In particular, one can easily extend a variation of the SINDy method for constructing fluid flow models with an absorbing ball \cite{kaptanoglu2021promoting} to general systems with an absorbing set that need not be a ball: it suffices to first use our data-driven methods to identify a candidate absorbing set, and then construct a model that for which this set is indeed absorbing. Crucially, both steps can be implemented with convex optimization.

Although our theory does not put any limitations on the dimension of the data, both EDMD and the construction of auxiliary functions using semidefinite programming exhibit computational bottlenecks when the state-space dimension is not small. This can be seen clearly when the EDMD dictionaries are polynomial, since $\ell$ and $m$ grow considerably with the state-space dimension. Therefore, for even moderately-sized input data the resulting semidefinite programs could be prohibitively large. To overcome this issue in the setting of EDMD, \cite{KernelEDMD} proposes a kernel-based EDMD formulation that transfers one from estimating the Koopman operator with a matrix of size given by the large dictionary to learning one of size given by the number of snapshots $n$. This kernel formulation offers a significant computational speed-up in understanding the Koopman operator for systems such as discretized PDEs, where the state-space dimension is high and temporal data is difficult to produce. 
It is however not clear that similar techniques can help within our framework.  

There are also other potential avenues for future work. One is to establish convergence rates in the spirit of \cite{Zhang2022}. Although it is impossible to prove universal results in this direction \cite{Krengel1978}, one could hope to identify classes of systems and dictionaries for which convergence rates can be proved. Another interesting problem is to quantify the gaps between predictions made using data-driven auxiliary functions (e.g. bounds on time averages) and their rigorous model-based counterparts. Progress in this direction depends on whether the challenges outlined in \cref{rem:convergence-rates} can be resolved under realistic assumptions. Finally, it has recently been demonstrated that (approximate) Koopman eigenfunctions can be used directly to construct approximate Lyapunov functions without solving an SOS problem (see \cref{rmk:Lyapunov}). This relies on the fact that certain Koopman eigenfunctions capture stability properties of a system \cite{mauroy2016global,vaidya2008lyapunov}. It would be interesting to investigate whether the spectral analysis of the Koopman operator can shed light on other dynamical properties that have been studied via auxiliary functions.
In summary, many important questions remain to be answered and we believe this work only scratches the surface on what is possible at the intersection of Koopman theory, EDMD, and auxiliary function frameworks for system analysis.

\section*{Acknowledgments}
We are grateful for the hospitality of the University of Surrey during the 2022 `Data and Dynamics' workshop, where this work was started. We also thank Stefan Klus and Enrique Zuazua for their insight into EDMD. JB was partially supported by an Institute of Advanced Studies Fellowship at Surrey and an NSERC Discovery Grant.

\bibliographystyle{./siamplain}
\bibliography{./reflist.bib}

\begin{thebibliography}{10}

\bibitem{abraham2019active}
{\sc I.~Abraham and T.~D. Murphey}, {\em Active learning of dynamics for
  data-driven control using {K}oopman operators}, IEEE Transactions on
  Robotics, 35 (2019), pp.~1071--1083,
  \url{https://doi.org/10.1109/TRO.2019.2923880}.

\bibitem{ahmadi2020learning}
{\sc A.~A. Ahmadi and B.~El~Khadir}, {\em Learning dynamical systems with side
  information}, SIAM Rev., 65 (2023), pp.~183--223,
  \url{https://doi.org/10.1137/20M1388644}.

\bibitem{bramburger2020minimum}
{\sc J.~J. Bramburger and D.~Goluskin}, {\em Minimum wave speeds in monostable
  reaction-diffusion equations: sharp bounds by polynomial optimization}, Proc.
  Roy. Soc. A., 476 (2020), p.~20200450(21),
  \url{https://doi.org/10.1098/rspa.2020.0450}.

\bibitem{BramPoincare}
{\sc J.~J. Bramburger and J.~N. Kutz}, {\em Poincar\'{e} maps for multiscale
  physics discovery and nonlinear {F}loquet theory}, Phys. D, 408 (2020),
  p.~132479(12), \url{https://doi.org/10.1016/j.physd.2020.132479}.

\bibitem{brunton2016koopman}
{\sc S.~L. Brunton, B.~W. Brunton, J.~L. Proctor, and J.~N. Kutz}, {\em Koopman
  invariant subspaces and finite linear representations of nonlinear dynamical
  systems for control}, PloS one, 11 (2016), p.~e0150171,
  \url{https://doi.org/10.1371/journal.pone.0150171}.

\bibitem{brunton2022modern}
{\sc S.~L. Brunton, M.~Budi\v{s}i\'{c}, E.~Kaiser, and J.~N. Kutz}, {\em Modern
  {K}oopman theory for dynamical systems}, SIAM Rev., 64 (2022), pp.~229--340,
  \url{https://doi.org/10.1137/21M1401243}.

\bibitem{SINDy}
{\sc S.~L. Brunton, J.~L. Proctor, and J.~N. Kutz}, {\em Discovering governing
  equations from data by sparse identification of nonlinear dynamical systems},
  Proc. Natl. Acad. Sci. USA, 113 (2016), pp.~3932--3937,
  \url{https://doi.org/10.1073/pnas.1517384113}.

\bibitem{Cho2002linear}
{\sc M.~J. Cho and R.~H. Stockbridge}, {\em {Linear programming formulation for
  optimal stopping problems}}, SIAM J. Control Optim., 40 (2002),
  pp.~1965--1982, \url{https://doi.org/10.1137/S0363012900377663}.

\bibitem{covella2022uncertainty}
{\sc F.~Covella and G.~Fantuzzi}, {\em Uncertainty propagation for nonlinear
  dynamics: A polynomial optimization approach}, in Proc. 2023 American Control
  Conference, 2023, pp.~4142--4147,
  \url{https://doi.org/10.23919/ACC55779.2023.10156169}.

\bibitem{Crnjaric-Zic2020}
{\sc N.~{\v{C}}rnjari{\'{c}}-{\v{Z}}ic, S.~Ma{\'{c}}e{\v{s}}i{\'{c}}, and
  I.~Mezi{\'{c}}}, {\em {Koopman Operator Spectrum for Random Dynamical
  Systems}}, J. Nonlinear Sci., 30 (2020), pp.~2007--2056,
  \url{https://doi.org/10.1007/s00332-019-09582-z}.

\bibitem{deka2022koopman}
{\sc S.~A. Deka, A.~M. Valle, and C.~J. Tomlin}, {\em Koopman-based neural
  lyapunov functions for general attractors}, in Proc. 61\textsuperscript{st}
  Conf. Decision \& Control, 2022, pp.~5123--5128,
  \url{https://doi.org/10.1109/CDC51059.2022.9992927}.

\bibitem{ChebFun}
{\sc T.~A. Driscoll, N.~Hale, and L.~N. Trefethen}, {\em Chebfun Guide},
  Pafnuty Publications, Oxford, 2014.

\bibitem{EthierKurtz1986}
{\sc S.~N. Ethier and T.~G. Kurtz}, {\em Markov processes}, Wiley Series in
  Probability and Mathematical Statistics: Probability and Mathematical
  Statistics, John Wiley \& Sons, Inc., New York, 1986,
  \url{https://doi.org/10.1002/9780470316658}.

\bibitem{EvansSwartz2000}
{\sc M.~Evans and T.~Swartz}, {\em {Approximating integrals via Monte Carlo and
  deterministic methods}}, Oxford Statistical Science Series, Oxford University
  Press, Oxford, 2000.

\bibitem{falconer2023}
{\sc S.~A. Falconer, D.~J.~B. Lloyd, and N.~Santitissadeekorn}, {\em Combining
  dynamic mode decomposition with ensemble {K}alman filtering for tracking and
  forecasting}, Phys. D, 449 (2023), pp.~Paper No. 133741, 19,
  \url{https://doi.org/10.1016/j.physd.2023.133741}.

\bibitem{fantuzzi2020bounding}
{\sc G.~Fantuzzi and D.~Goluskin}, {\em Bounding extreme events in nonlinear
  dynamics using convex optimization}, SIAM J. Appl. Dyn. Syst., 19 (2020),
  pp.~1823--1864, \url{https://doi.org/10.1137/19M1277953}.

\bibitem{Fantuzzi2016siads}
{\sc G.~Fantuzzi, D.~Goluskin, D.~Huang, and S.~I. Chernyshenko}, {\em {Bounds
  for deterministic and stochastic dynamical systems using sum-of-squares
  optimization}}, SIAM J. Appl. Dyn. Syst., 15 (2016), pp.~1962--1988,
  \url{https://doi.org/10.1137/15M1053347}.

\bibitem{goluskin2018bounding}
{\sc D.~Goluskin}, {\em Bounding averages rigorously using semidefinite
  programming: mean moments of the {L}orenz system}, J. Nonlinear Sci., 28
  (2018), pp.~621--651, \url{https://doi.org/10.1007/s00332-017-9421-2}.

\bibitem{goluskin2020bounding}
{\sc D.~Goluskin}, {\em Bounding extrema over global attractors using
  polynomial optimisation}, Nonlinearity, 33 (2020), pp.~4878--4899,
  \url{https://doi.org/10.1088/1361-6544/ab8f7b}.

\bibitem{Henrion2014convex}
{\sc D.~Henrion and M.~Korda}, {\em {Convex computation of the region of
  attraction of polynomial control systems}}, IEEE Trans. Automat. Control, 59
  (2014), pp.~297--312, \url{https://doi.org/10.1109/TAC.2013.2283095}.

\bibitem{Henrion2008nonlinear}
{\sc D.~Henrion, J.~B. Lasserre, and C.~Savorgnan}, {\em {Nonlinear optimal
  control synthesis via occupation measures}}, in Proc. IEEE Conf. Decision \&
  Control, Cancun, Mexico, 2008, IEEE, pp.~4749--4754,
  \url{https://doi.org/10.1109/CDC.2008.4739136}.

\bibitem{Hernandez1996linear}
{\sc D.~Hern\'{a}ndez-Hern\'{a}ndez, O.~Hern\'{a}ndez-Lerma, and M.~Taksar},
  {\em The linear programming approach to deterministic optimal control
  problems}, Appl. Math. (Warsaw), 24 (1996), pp.~17--33,
  \url{https://doi.org/10.4064/am-24-1-17-33}.

\bibitem{hilbert1888}
{\sc D.~Hilbert}, {\em {\"U}ber die {D}arstellung definiter {F}ormen als
  {S}umme von {F}ormenquadraten}, Math. Ann., 32 (1888), pp.~342--350,
  \url{https://doi.org/10.1007/BF01443605}.

\bibitem{Jones2019}
{\sc M.~Jones and M.~M. Peet}, {\em {Using SOS and sublevel set volume
  minimization for estimation of forward reachable sets}}, IFAC-PapersOnLine,
  52 (2019), pp.~484--489, \url{https://doi.org/10.1016/j.ifacol.2019.12.008}.

\bibitem{kaheman2020sindy}
{\sc K.~Kaheman, J.~N. Kutz, and S.~L. Brunton}, {\em S{IND}y-{PI}: a robust
  algorithm for parallel implicit sparse identification of nonlinear dynamics},
  Roy. Soc. Proc. A., 476 (2020), p.~20200279(25),
  \url{https://doi.org/10.1098/rspa.2020.0279}.

\bibitem{kaiser2021data}
{\sc E.~Kaiser, J.~N. Kutz, and S.~L. Brunton}, {\em Data-driven discovery of
  {K}oopman eigenfunctions for control}, Machine Learning: Science and
  Technology, 2 (2021), p.~035023,
  \url{https://doi.org/10.1088/2632-2153/abf0f5}.

\bibitem{kaptanoglu2021promoting}
{\sc A.~A. Kaptanoglu, J.~L. Callaham, A.~Aravkin, C.~J. Hansen, and S.~L.
  Brunton}, {\em Promoting global stability in data-driven models of quadratic
  nonlinear dynamics}, Phys. Rev. Fluids, 6 (2021), p.~094401,
  \url{https://doi.org/10.1103/PhysRevFluids.6.094401}.

\bibitem{Khalil2002}
{\sc H.~K. Khalil}, {\em Nonlinear Systems}, Prentice Hall, Hoboken, NJ,
  3rd~ed., 2002.

\bibitem{eDMDconv2}
{\sc S.~Klus, P.~Koltai, and C.~Sch\"{u}tte}, {\em On the numerical
  approximation of the {P}erron-{F}robenius and {K}oopman operator}, J. Comput.
  Dyn., 3 (2016), pp.~51--79, \url{https://doi.org/10.3934/jcd.2016003}.

\bibitem{Klus2020}
{\sc S.~Klus, F.~Nüske, S.~Peitz, J.-H. Niemann, C.~Clementi, and
  C.~Schütte}, {\em Data-driven approximation of the koopman generator: Model
  reduction, system identification, and control}, Phys. D, 406 (2020),
  p.~132416, \url{https://doi.org/10.1016/j.physd.2020.132416}.

\bibitem{koopman1931hamiltonian}
{\sc B.~O. Koopman}, {\em Hamiltonian systems and transformation in {H}ilbert
  space}, Proceedings of the National Academy of Sciences, 17 (1931),
  pp.~315--318, \url{https://doi.org/10.1073/pnas.17.5.315}.

\bibitem{KordaDataInvariantSet}
{\sc M.~Korda}, {\em Computing controlled invariant sets from data using convex
  optimization}, SIAM J. Control Optim., 58 (2020), pp.~2871--2899,
  \url{https://doi.org/10.1137/19M1305835}.

\bibitem{Korda2013inner}
{\sc M.~Korda, D.~Henrion, and C.~N. Jones}, {\em {Inner approximations of the
  region of attraction for polynomial dynamical systems}}, IFAC Proceedings
  Volumes, 43 (2013), pp.~534--539,
  \url{https://doi.org/10.3182/20130904-3-FR-2041.00002}.

\bibitem{Korda2014convex}
{\sc M.~Korda, D.~Henrion, and C.~N. Jones}, {\em {Convex computation of the
  maximum controlled invariant set for polynomial control systems}}, SIAM J.
  Control Optim., 52 (2014), pp.~2944--2969,
  \url{https://doi.org/10.1137/130914565}.

\bibitem{korda2018linear}
{\sc M.~Korda and I.~Mezi\'{c}}, {\em Linear predictors for nonlinear dynamical
  systems: {K}oopman operator meets model predictive control}, Automatica, 93
  (2018), pp.~149--160, \url{https://doi.org/10.1016/j.automatica.2018.03.046}.

\bibitem{eDMDconv}
{\sc M.~Korda and I.~Mezi\'{c}}, {\em On convergence of extended dynamic mode
  decomposition to the {K}oopman operator}, J. Nonlinear Sci., 28 (2018),
  pp.~687--710, \url{https://doi.org/10.1007/s00332-017-9423-0}.

\bibitem{Krengel1978}
{\sc U.~Krengel}, {\em {On the speed of convergence in the ergodic theorem}},
  Monatsh. Math., 86 (1978), pp.~3--6,
  \url{https://doi.org/10.1007/BF01300052}.

\bibitem{Kuntz2016bounding}
{\sc J.~Kuntz, M.~Ottobre, G.-B. Stan, and M.~Barahona}, {\em {Bounding
  stationary averages of polynomial diffusions via semidefinite programming}},
  SIAM J. Sci. Comput., 38 (2016), pp.~A3891--A3920,
  \url{https://doi.org/10.1137/16M107801X}.

\bibitem{lasserre2001global}
{\sc J.~B. Lasserre}, {\em Global optimization with polynomials and the problem
  of moments}, SIAM J. Optim., 11 (2000/01), pp.~796--817,
  \url{https://doi.org/10.1137/S1052623400366802}.

\bibitem{lasserre2015book}
{\sc J.~B. Lasserre}, {\em An introduction to polynomial and semi-algebraic
  optimization}, Cambridge Texts in Applied Mathematics, Cambridge University
  Press, Cambridge, 2015, \url{https://doi.org/10.1017/CBO9781107447226}.

\bibitem{Lasserre2008nonlinear}
{\sc J.~B. Lasserre, D.~Henrion, C.~Prieur, and E.~Tr{\'{e}}lat}, {\em
  {Nonlinear optimal control via occupation measures and LMI-relaxations}},
  SIAM J. Control Optim., 47 (2008), pp.~1643--1666,
  \url{https://doi.org/10.1137/070685051}.

\bibitem{Laurent2009sos}
{\sc M.~Laurent}, {\em Sums of squares, moment matrices and optimization over
  polynomials}, in Emerging applications of algebraic geometry, vol.~149 of IMA
  Vol. Math. Appl., Springer, New York, 2009, pp.~157--270,
  \url{https://doi.org/10.1007/978-0-387-09686-5\_7}.

\bibitem{lofberg2004yalmip}
{\sc J.~L\"ofberg}, {\em {YALMIP}: A toolbox for modeling and optimization in
  {MATLAB}}, in 2004 IEEE Int. Conf. Robotics and Automation, IEEE, 2004,
  pp.~284--289, \url{https://doi.org/10.1109/CACSD.2004.1393890}.

\bibitem{lofberg2009pre}
{\sc J.~L\"ofberg}, {\em Pre-and post-processing sum-of-squares programs in
  practice}, IEEE Trans. Automat. Control, 54 (2009), pp.~1007--1011,
  \url{https://doi.org/10.1109/TAC.2009.2017144}.

\bibitem{Lyapunov1892stability}
{\sc A.~M. Lyapunov}, {\em Stability of motion: General problem}, Internat. J.
  Control, 55 (1992), pp.~539--589,
  \url{https://doi.org/10.1080/00207179208934254}.
\newblock Translated by A. T. Fuller from a French translation of Lyapunov's
  original 1892 dissertation.

\bibitem{Magron2019semidefinite}
{\sc V.~Magron, P.-L. Garoche, D.~Henrion, and X.~Thirioux}, {\em {Semidefinite
  approximations of reachable sets for discrete-time polynomial systems}}, SIAM
  J. Control Optim., 57 (2019), pp.~2799--2820,
  \url{https://doi.org/10.1137/17M1121044}.

\bibitem{mamakoukas2019local}
{\sc G.~Mamakoukas, M.~Castano, X.~Tan, and T.~Murphey}, {\em Local {K}oopman
  operators for data-driven control of robotic systems}, in {Robotics: Science
  and Systems XV}, 2019.

\bibitem{mauroy2016global}
{\sc A.~Mauroy and I.~Mezi\'{c}}, {\em Global stability analysis using the
  eigenfunctions of the {K}oopman operator}, IEEE Trans. Automat. Control, 61
  (2016), pp.~3356--3369, \url{https://doi.org/10.1109/TAC.2016.2518918}.

\bibitem{WeakSINDy2}
{\sc D.~A. Messenger and D.~M. Bortz}, {\em Weak {SIND}y for partial
  differential equations}, J. Comput. Phys., 443 (2021), p.~110525(27),
  \url{https://doi.org/10.1016/j.jcp.2021.110525}.

\bibitem{WeakSINDy1}
{\sc D.~A. Messenger and D.~M. Bortz}, {\em Weak {SIND}y: {G}alerkin-based
  data-driven model selection}, Multiscale Model. Simul., 19 (2021),
  pp.~1474--1497, \url{https://doi.org/10.1137/20M1343166}.

\bibitem{mezic2004uncertainty}
{\sc I.~Mezic and T.~Runolfsson}, {\em Uncertainty analysis of complex
  dynamical systems}, in Proc. 2004 American Control Conference, vol.~3, IEEE,
  2004, pp.~2659--2664, \url{https://doi.org/10.23919/ACC.2004.1383866}.

\bibitem{mezic2008uncertainty}
{\sc I.~Mezi\'{c} and T.~Runolfsson}, {\em Uncertainty propagation in dynamical
  systems}, Automatica, 44 (2008), pp.~3003--3013,
  \url{https://doi.org/10.1016/j.automatica.2008.04.020}.

\bibitem{Miller2021peak}
{\sc J.~Miller, D.~Henrion, and M.~Sznaier}, {\em {Peak estimation recovery and
  safety analysis}}, IEEE Control Systems Letters, 5 (2021), pp.~1982--1987,
  \url{https://doi.org/10.1109/LCSYS.2020.3047591}.

\bibitem{Miller2021uncertain}
{\sc J.~Miller, D.~Henrion, M.~Sznaier, and M.~Korda}, {\em {Peak estimation
  for uncertain and switched systems}}, in Proceedings of the 60th IEEE
  Conference on Decision and Control, 2021, pp.~3222--3228,
  \url{https://doi.org/10.1109/cdc45484.2021.9683778}.

\bibitem{mosek2015mosek}
{\sc A.~Mosek}, {\em The {MOSEK} optimization toolbox for {MATLAB} manual},
  2015.

\bibitem{moyalan2022data}
{\sc J.~Moyalan, H.~Choi, Y.~Chen, and U.~Vaidya}, {\em Data-driven optimal
  control via linear transfer operators: a convex approach}, Automatica, 150
  (2023), p.~110841(12),
  \url{https://doi.org/10.1016/j.automatica.2022.110841}.

\bibitem{polyNP}
{\sc K.~G. Murty and S.~N. Kabadi}, {\em Some {NP}-complete problems in
  quadratic and nonlinear programming}, Math. Programming, 39 (1987),
  pp.~117--129, \url{https://doi.org/10.1007/BF02592948}.

\bibitem{NesterovSOS}
{\sc Y.~Nesterov}, {\em Squared functional systems and optimization problems},
  in High performance optimization, vol.~33 of Appl. Optim., Kluwer Acad.
  Publ., Dordrecht, 2000, pp.~405--440,
  \url{https://doi.org/10.1007/978-1-4757-3216-0\_17}.

\bibitem{nonomura2018}
{\sc T.~Nonomura, H.~Shibata, and R.~Takaki}, {\em Dynamic mode decomposition
  using a kalman filter for parameter estimation}, AIP Advances, 8 (2018),
  p.~105106, \url{https://doi.org/10.1063/1.5031816}.

\bibitem{Odani1995}
{\sc K.~Odani}, {\em {The limit cycle of the van der Pol equation is not
  algebraic}}, J. Differential Equations, 115 (1995), pp.~146--152,
  \url{https://doi.org/10.1006/jdeq.1995.1008}.

\bibitem{Parker2021pendulum}
{\sc J.~P. Parker, D.~Goluskin, and G.~M. Vasil}, {\em {A study of the double
  pendulum using polynomial optimization}}, Chaos, 31 (2021), p.~103102,
  \url{https://doi.org/10.1063/5.0061316}.

\bibitem{Parrilo2003}
{\sc P.~A. Parrilo}, {\em Semidefinite programming relaxations for
  semialgebraic problems}, Math. Program. B, 96 (2003), pp.~293--320,
  \url{https://doi.org/10.1007/s10107-003-0387-5}.

\bibitem{parrilo2013}
{\sc P.~A. Parrilo}, {\em Polynomial optimization, sums of squares, and
  applications}, in Semidefinite optimization and convex algebraic geometry,
  vol.~13 of MOS-SIAM Ser. Optim., SIAM, Philadelphia, PA, 2013, pp.~47--157,
  \url{https://doi.org/10.1137/1.9781611972290.ch3}.

\bibitem{peitz2019koopman}
{\sc S.~Peitz and S.~Klus}, {\em Koopman operator-based model reduction for
  switched-system control of {PDE}s}, Automatica, 106 (2019), pp.~184--191,
  \url{https://doi.org/10.1016/j.automatica.2019.05.016}.

\bibitem{Prajna2006barrier}
{\sc S.~Prajna}, {\em Barrier certificates for nonlinear model validation},
  Automatica, 42 (2006), pp.~117--126,
  \url{https://doi.org/10.1016/j.automatica.2005.08.007}.

\bibitem{rudy2017data}
{\sc S.~H. Rudy, S.~L. Brunton, J.~L. Proctor, and J.~N. Kutz}, {\em
  Data-driven discovery of partial differential equations}, Science Advances, 3
  (2017), p.~e1602614, \url{https://doi.org/10.1126/sciadv.160261}.

\bibitem{McCalla}
{\sc H.~Schaeffer and S.~G. McCalla}, {\em Sparse model selection via integral
  terms}, Phys. Rev. E, 96 (2017), p.~023302(7),
  \url{https://doi.org/10.1103/physreve.96.023302}.

\bibitem{Schlosser2022}
{\sc C.~Schlosser}, {\em Converging approximations of attractors via almost
  {L}yapunov functions and semidefinite programming}, IEEE Control Syst. Lett.,
  6 (2022), pp.~2912--2917, \url{https://doi.org/10.1109/lcsys.2022.3180110}.

\bibitem{Schlosser2021}
{\sc C.~Schlosser and M.~Korda}, {\em Converging outer approximations to global
  attractors using semidefinite programming}, Automatica, 134 (2021),
  p.~109900(9), \url{https://doi.org/10.1016/j.automatica.2021.109900}.

\bibitem{Stewart1969}
{\sc G.~W. Stewart}, {\em {On the Continuity of the Generalized Inverse}}, SIAM
  J. Appl. Math., 17 (1969), pp.~33--45, \url{https://doi.org/10.1137/0117004}.

\bibitem{Streif2014}
{\sc S.~Streif, D.~Henrion, and R.~Findeisen}, {\em {Probabilistic and
  set-based model invalidation and estimation using LMIs}}, IFAC Proceedings
  Volumes, 19 (2014), pp.~4110--4115,
  \url{https://doi.org/10.3182/20140824-6-za-1003.02227}.

\bibitem{Streif2013}
{\sc S.~Streif, P.~Rumschinski, D.~Henrion, and R.~Findeisen}, {\em {Estimation
  of consistent parameter sets for continuous-time nonlinear systems using
  occupation measures and LMI relaxations}}, in Proc. 52nd IEEE Conf. Decision
  \& Control, 2013, pp.~6379--6384,
  \url{https://doi.org/10.1109/CDC.2013.6760898}.

\bibitem{Tan2006stability}
{\sc W.~Tan and A.~Packard}, {\em {Stability region analysis using sum of
  squares programming}}, in Proc. Amer. Control Conf., Minneapolis, MN, USA,
  June 14-16, 2006, IEEE, pp.~2297--2302,
  \url{https://doi.org/10.1109/ACC.2006.1656562}.

\bibitem{thomas1999deterministic}
{\sc R.~Thomas}, {\em Deterministic chaos seen in terms of feedback circuits:
  analysis, synthesis, ``labyrinth chaos''}, Internat. J. Bifur. Chaos, 9
  (1999), pp.~1889--1905, \url{https://doi.org/10.1142/S0218127499001383}.

\bibitem{tobasco2018optimal}
{\sc I.~Tobasco, D.~Goluskin, and C.~R. Doering}, {\em Optimal bounds and
  extremal trajectories for time averages in nonlinear dynamical systems},
  Phys. Lett. A, 382 (2018), pp.~382--386,
  \url{https://doi.org/10.1016/j.physleta.2017.12.023}.

\bibitem{vaidya2008lyapunov}
{\sc U.~Vaidya and P.~G. Mehta}, {\em Lyapunov measure for almost everywhere
  stability}, IEEE Trans. Automat. Control, 53 (2008), pp.~307--323,
  \url{https://doi.org/10.1109/TAC.2007.914955}.

\bibitem{Valmorbida2017region}
{\sc G.~Valmorbida and J.~Anderson}, {\em {Region of attraction estimation
  using invariant sets and rational Lyapunov functions}}, Automatica, 75
  (2017), pp.~37--45, \url{https://doi.org/10.1016/j.automatica.2016.09.003}.

\bibitem{Varadarajan1958}
{\sc V.~S. Varadarajan}, {\em On the convergence of sample probability
  distributions}, Sankhy\={a}, 19 (1958), pp.~23--26.

\bibitem{Wanner2022}
{\sc M.~Wanner and I.~Mezi{\'{c}}}, {\em {Robust Approximation of the
  Stochastic Koopman Operator}}, SIAM J. Appl. Dyn. Syst., 21 (2022),
  pp.~1930--1951, \url{https://doi.org/10.1137/21M1414425}.

\bibitem{eDMD}
{\sc M.~O. Williams, I.~G. Kevrekidis, and C.~W. Rowley}, {\em A data-driven
  approximation of the {K}oopman operator: extending dynamic mode
  decomposition}, J. Nonlinear Sci., 25 (2015), pp.~1307--1346,
  \url{https://doi.org/10.1007/s00332-015-9258-5}.

\bibitem{KernelEDMD}
{\sc M.~O. Williams, C.~W. Rowley, and I.~G. Kevrekidis}, {\em A kernel-based
  method for data-driven {K}oopman spectral analysis}, J. Comput. Dyn., 2
  (2015), pp.~247--265, \url{https://doi.org/10.3934/jcd.2015005}.

\bibitem{Zhang2022}
{\sc C.~Zhang and E.~Zuazua}, {\em {A quantitative analysis of Koopman operator
  methods for system identification and predictions}}, C. R. Math. Acad. Sci.
  Paris, 351 (2022), pp.~1--31, \url{https://doi.org/10.5802/crmeca.138}.

\bibitem{zheng2022data}
{\sc L.~Zheng, X.~Liu, Y.~Xu, W.~Hu, and C.~Liu}, {\em Data-driven estimation
  for region of attraction for transient stability using koopman operator},
  CSSE J. Power Energy Syst.,  (2022),
  \url{https://doi.org/10.17775/CSEEJPES.2021.09360}.

\end{thebibliography}
\end{document}